\newcommand{\Z}[1]{\mathbb{Z}/#1\mathbb{Z}}
\newtheorem{teor}{Theorem}
\newtheorem{Theorem}{Theorem}
\newtheorem{cor}{Corollary}
\newtheorem{prop}{Proposition}
\newtheorem{Lemma}{Lemma}
\theoremstyle{definition}
\newtheorem{defi}{Definition}
\newtheorem{rem}{Remark}
\renewcommand{\subjclassname}{AMS \textup{2010} Mathematics Subject
Classification\ }
\author{Jos\'{e} Mar\'{i}a Grau}
\address{Departamento de Matemáticas, Universidad de Oviedo\\ Avda. Calvo Sotelo s/n, 33007 Oviedo, Spain}
\email{grau@uniovi.es}
\author{Celino Miguel}
\address{Instituto de Telecomunicaoes, Polo de Covilha}
\email{celino@ubi.pt}
\author{Antonio M. Oller-Marc\'{e}n}
\address{Centro Universitario de la Defensa de Zaragoza\\ Ctra. Huesca s/n, 50090 Zaragoza, Spain}
\email{oller@unizar.es}
\title{Generalized Quaternion  Rings over $\mathbb{Z}/n\mathbb{Z}$ for an odd $n$}
\begin{document}

\begin{abstract}  
We consider a generalization of the quaternion ring $\Big(\frac{a,b}{R}\Big)$ over a commutative unital ring $R$ that includes the case when $a$ and $b$ are not units of $R$. In this paper, we focus on the case $R=\mathbb{Z}/n\mathbb{Z}$ for and odd $n$. In particular, for every odd integer $n$ we compute the number of non-isomorphic generalized quaternion rings $\Big(\frac{a,b}{\mathbb{Z}/n\mathbb{Z}}\Big)$.
\end{abstract}

\maketitle
\subjclassname{11R52,16-99}

\keywords{Keywords: Quaternion algebra, $\Z{n}$, Structure}

\section{Introduction}

The origin of quaternions dates back to 1843, when William Rowan Hamilton considered a $4$-dimensional vector space over $\mathbb{R}$ with basis $\{1,i,j,k\}$ and defined an associative product given by the now classical rules $i^2=j^2=-1$ and $ij=-ji=k$. These so-called ``Hamilton quaternions'' turned out to be the only division algebra over $\mathbb{R}$ with dimension greater than 2.
Later on, this idea was extended to define quaternion algebras over arbitrary fields. Thus, if $F$ is a field and $a,b\in F\setminus\{0\}$ we can define a unital, associative, $4$-dimensional algebra over $F$ just considering a basis $\{1,i,j,k\}$ and the product given by $i^2=a$, $j^2 = b$ and $ij = -ji=k$. The structure of quaternion algebras over fields of characteristic different from two is well-known. Indeed, such a quaternion algebra is either a division ring or isomorphic to the matrix ring $\mathbb{M}_2(F)$ \cite[p.19]{pr}. This is no longer true if $F$ is of characteristic 2, since quaternions over $\Z{2}$ are not a division ring but they form a commutative ring, while $\mathbb{M}_2(\Z{2})$ is not commutative. Nevertheless, some authors consider a different product in the characteristic 2 case given by $i^2 + i = a$, $j^2 = b$, and $ ji = (i + 1)j=k$. The algebra defined by this product is isomorphic to the corresponding matrix ring.

Generalizations of the notion of quaternion algebra to other commutative base rings $R$ have been considered by Kanzaki \cite{ka}, Hahn \cite{ha}, Knus \cite{max}, Gross and Lucianovic \cite{GLU},  Tuganbaev \cite{tu}, and most recently by John Voight \cite{jv,jv3}. Quaternions over finite rings have attracted significant attention since they have applications in coding theory see,  \cite{oz2,oz1,codes}. In \cite{nosotros} the case $R=\mathbb{Z}/n\mathbb{Z}$ was studied proving the following result.

\begin{Theorem}[\cite{nosotros}, Theorem 4]
Let $n$ be an integer and let $a,b$ be such that $\gcd(a,n)=\gcd(b,n)=1$. The following hold:
\begin{itemize}
\item[i)] If $n$ is odd, then
$$\Big(\frac{a,b}{\Z{n}} \Big) \cong \mathbb{M}_2(\Z{n}).$$
\item[ii)] If $n=2^sm$ with $s>0$ and $m$ odd, then
$$\Big(\frac{a,b}{\Z{n}}\Big) \cong \begin{cases}\mathbb{M}_2(\Z{m}) \times (\frac{-1,-1}{\Z{2^s}}), & \textrm{if $s=1$ or $a \equiv b\equiv -1 \pmod{4}$}; \\ \mathbb{M}_2(\Z{m}) \times (\frac{1,1}{\Z{2^s}}), & \textrm{otherwise}.\end{cases}$$
\end{itemize}
\end{Theorem}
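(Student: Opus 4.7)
The plan splits into two parts mirroring the statement.

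For part (i), my strategy is to reduce to prime powers via the Chinese Remainder Theorem, solve a norm equation, and then pass to matrices. Since the generalized quaternion construction commutes with finite direct products of base rings, it suffices to establish $\Big(\frac{a,b}{\Z{p^k}}\Big)\cong \mathbb{M}_2(\Z{p^k})$ for each odd prime power $p^k$. The key intermediate step is to show that $ax^2+by^2\equiv 1\pmod{p^k}$ admits a solution. Modulo $p$ this follows from the classical pigeonhole argument (the sets $\{ax^2\}$ and $\{1-by^2\}$ each have cardinality $(p+1)/2$ in $\mathbb{F}_p$, hence must meet); Hensel's lemma, whose hypotheses are straightforward to check because $p$ is odd and $a,b$ are units, then lifts the solution to $\Z{p^k}$.

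Given such $(x_0,y_0)$, the element $\omega:=x_0 i+y_0 j$ satisfies $\omega^2=1$, so $e:=(1+\omega)/2$ and $1-e$ form a complete system of orthogonal idempotents (note that $2$ is invertible since $p$ is odd). The resulting Peirce decomposition, together with a rank count over $\Z{p^k}$, identifies the algebra with $\mathbb{M}_2(\Z{p^k})$ in the standard way.

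For part (ii), CRT yields the factorisation $\Big(\frac{a,b}{\Z{n}}\Big)\cong \Big(\frac{a,b}{\Z{m}}\Big)\times \Big(\frac{a,b}{\Z{2^s}}\Big)$, whose first factor is $\mathbb{M}_2(\Z{m})$ by part (i). It remains to identify the second factor. When $s=1$ there is nothing to do, since $-1\equiv 1\pmod 2$ makes both candidate algebras literally equal. For $s\ge 2$ I plan to exploit three isomorphisms valid for units $a,b$: the rescaling $(au^2,bv^2)\cong(a,b)$, the symmetry $(a,b)\cong(b,a)$, and the identity $(a,b)\cong(a,-ab)$ obtained by replacing $j$ with $ij$. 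A finite case analysis on $a,b$ modulo $8$ (which controls the class modulo squares in $(\Z{2^s})^\times$ for $s\ge 3$, with the easier cases $s=2$ handled separately) then reduces every pair $(a,b)$ either to $(1,1)$ or to $(-1,-1)$, the separating condition coming out to be exactly $a\equiv b\equiv 3\pmod 4$.

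The main obstacle is verifying that the two target algebras are genuinely non-isomorphic for $s\ge 2$, so that the dichotomy is sharp. My plan is to distinguish them through the associated norm form $N(x+yi+zj+wk)=x^2-ay^2-bz^2+abw^2$: for $(a,b)=(1,1)$ the form is visibly isotropic, whereas for $(a,b)=(-1,-1)$ the form $x^2+y^2+z^2+w^2$ already fails to represent $0$ nontrivially modulo $4$. This obstruction prevents $\Big(\frac{-1,-1}{\Z{2^s}}\Big)$ from splitting as a matrix ring for $s\ge 2$ and yields the needed non-isomorphism, closing the argument.
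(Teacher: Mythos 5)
This theorem is not proved in the present paper at all: it is imported verbatim from \cite{nosotros}, so there is no internal proof to compare against. Judged on its own terms, your part (i) is a correct and standard plan (CRT reduction, solvability of $ax^2+by^2\equiv 1$ by pigeonhole plus Hensel, then an idempotent $\tfrac{1}{2}(1+\omega)$ with $\omega=x_0i+y_0j$; the only routine detail left implicit is verifying that the Peirce pieces are free of rank one, or equivalently producing explicit matrix units). Part (ii), however, has two genuine gaps.

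First, the three isomorphisms you allow yourself --- $(au^2,bv^2)\cong(a,b)$, $(a,b)\cong(b,a)$, $(a,b)\cong(a,-ab)$ --- do \emph{not} suffice to reduce every unit pair to $(1,1)$ or $(-1,-1)$ when $s\ge 3$. Working with unit classes modulo squares, i.e.\ with residues $\{1,3,5,7\}$ mod $8$, these moves partition the $16$ pairs into five orbits, namely $\{(1,1),(1,7),(7,1)\}$, $\{(7,7)\}$, $\{(3,3),(3,7),(7,3)\}$, $\{(5,5),(5,7),(7,5)\}$ and the six-element orbit of $(1,3)$; for instance $(3,3)\mapsto(3,-9)\equiv(3,7)$, $(7,3)\mapsto(7,-21)\equiv(7,3)$, so the orbit of $(3,3)$ never reaches $(7,7)=(-1,-1)$, and likewise $(1,3)$ and $(5,5)$ never reach $(1,1)$. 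You need a genuinely stronger isomorphism principle --- e.g.\ replacing $i$ by an arbitrary pure quaternion whose square is the desired unit and completing it to a standard basis (the Cayley--Dickson-type argument this paper uses in Proposition~\ref{formamat}), or equivalently an isometry of the ternary forms $\langle a,b,-ab\rangle$ --- to collapse these five orbits into the two classes of the statement. Second, your separating invariant is wrong: the form $x^2+y^2+z^2+w^2$ \emph{does} represent $0$ nontrivially (indeed primitively) modulo $4$, since $1+1+1+1\equiv 0$; and in any case ``is not a matrix ring'' cannot distinguish the two candidates, because neither $\Big(\frac{1,1}{\Z{2^s}}\Big)$ nor $\Big(\frac{-1,-1}{\Z{2^s}}\Big)$ is isomorphic to $\mathbb{M}_2(\Z{2^s})$ for $s\ge 1$ (both become commutative modulo $2$). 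A working invariant is, for example, the \emph{number} of elements of norm $0$ (preserved by isomorphisms by Corollary~\ref{coriso}): modulo $4$ the form $x^2-y^2-z^2+w^2$ has $96$ zeros while $x^2+y^2+z^2+w^2$ has $32$.
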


In this paper, we extend the concept of quaternion rings over commutative, associative, unital rings to the case when $i^2$ and $j^2$ are not necessarily units of the ring $R$. In particular, we will focus on the case $R=\mathbb{Z}/n\mathbb{Z}$ for an odd $n$.

\section{Basic concepts}

Let $R$ be a commutative and associative ring with identity and let $H(R)$ denote the free $R$-module of rank $4$ with basis $\{1, i, j, k\}$. That is,
\begin{equation*}H(R)=\{x_0+x_1i+x_2j+x_3k\;:\;x_0, x_1, x_2, x_3\in R\}.\end{equation*}
Now, let $a,b\in R$ and define an associative multiplication in $H(R)$ according to the following rules:
\begin{align*}i^2&=a,\\ j^2&=b,\\ ij&=-ji=k.\end{align*}
Thus, we obtain an associative, unital ring called a quaternion ring over $R$ which is denoted by $\Big(\frac{a,b}{R}\Big)$.

\begin{defi}
A \textit{standard basis} of $\Big(\frac{a,b}{R}\Big)$ is any base $\mathcal{B}=\{1,I,J,K\}$ of the free $R$-module $H(R)$ such that \begin{align*}I^2&=a,\\ J^2&=b,\\ IJ&=-JI=K.\end{align*}
Given the standard basis $\{1,i,j,k\}$, the elements of the submodule $R\langle i,j,k\rangle$ are called pure quaternions. Note that the square of a pure quaternion always lays on $R$.
\end{defi}

\begin{rem}
Given $q\in \Big(\frac{a,b}{R}\Big)$ and a fixed standard basis, there exist $x_0\in R$ and a pure quaternion $q_0$ such that $q=x_0+q_0$. Observe that both $x_0$ and $q_0$ are uniquely determined and also that the only pure quaternion in $R$ is 0.
\end{rem}

The following classical concepts are not altered by the fact that $a$ and $b$ are not necessarily units.

\begin{defi}\label{trn}
Consider the standard basis $\{1,i,j,k\}$ and let $q\in\Big(\frac{a,b}{R}\Big)$. Put $q=x_0+q_0$ with $x_0\in R$ and $q_0=x_1i+x_2j+x_3k$ a pure quaternion. Then,
\begin{itemize}
\item[i)] The conjugate of $q$ is: $\overline{q}=x_0-q_0=x_0+x_1i-x_2j-x_3k$.
\item[ii)] The trace of $q$ is $\textrm{tr}(q)=q+\overline{q}=2x_0.$
\item[iii)] The norm of $q$ is $\textrm{n}(q)=q\overline{q}=x_0^2-q_0^2=x_0^2-ax_1^2-bx_2^2+abx_3^2$.
\end{itemize}
Note that $\textrm{n}(q),\textrm{tr}(q)\in R$ and $\textrm{n}(q_1q_2)= \textrm{n}(q_1)\textrm{n}(q_2)$.
\end{defi}

\begin{rem}\label{rempur}
Observe that, if $q$ is a pure quaternion, then $\overline{q}=-q$ and $\textrm{tr}(q)=0$. The converse also holds only if $R$ has odd characteristic. 
\end{rem}

In the following result we will see that isomorphisms preserve conjugation. The classical proof in the case when $a$ and $b$ are units (see \cite[Theorem 5.6]{CON} for instance) is no longer valid in our setting and it must be slightly modified.

\begin{Theorem}\label{teoriso}
Let $f:\Big(\frac{a,b}{R} \Big) \to \Big(\frac{c,d}{R} \Big)$ be a ring isomorphism. Then, for every $q \in \Big(\frac{a,b}{R} \Big)$ it holds that 
$f(\overline{q})=\overline{f(q)}$.
\end{Theorem}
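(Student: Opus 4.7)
The plan is to reduce the problem to showing that $f$ preserves the scalar part of each quaternion and then to establish this via a trace computation on the underlying $R$-module. Writing $q = x_0 + q_0$ with $q_0$ pure, we have $\overline{q} = 2x_0 - q$, so $f(\overline{q}) = f(2x_0) - f(q)$ and $\overline{f(q)} = 2y_0 - f(q)$, where $y_0$ is the scalar part of $f(q)$. Since $R = \Z{n}$ is the prime subring of $\Big(\frac{a,b}{R}\Big)$, any ring homomorphism fixes $R\cdot 1$ pointwise; in particular $f(2x_0) = 2x_0$, and $f$ is $R$-linear as a map between the underlying free modules. The desired identity therefore reduces to the single equality $x_0 = y_0$.

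To obtain $x_0=y_0$, I would study the left-multiplication endomorphism $L_q : H(R) \to H(R)$, $v \mapsto qv$, which is $R$-linear. A direct matrix computation in the basis $\{1, i, j, k\}$ shows that every diagonal entry of $L_q$ equals $x_0$ (the off-diagonal terms picking up factors of $a$, $b$, or $ab$), so $\mathrm{tr}_R(L_q) = 4 x_0$. Because $f$ is an $R$-linear ring isomorphism, we have $L_{f(q)} = f \circ L_q \circ f^{-1}$, and conjugate $R$-endomorphisms share the same trace, giving $4 y_0 = \mathrm{tr}_R(L_{f(q)}) = \mathrm{tr}_R(L_q) = 4 x_0$. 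The hypothesis that $n$ is odd makes $4$ a unit in $\Z{n}$, so $y_0 = x_0$ and the theorem follows.

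The main obstacle is precisely what the authors are hinting at: the classical argument characterizes pure quaternions as the elements $q$ with $q^2 \in R$ and either $q = 0$ or $q \notin R$, but this fails in $\Z{n}$ due to zero divisors. For instance, in $\Big(\frac{a,b}{\Z{9}}\Big)$ the element $q = 3 + 3i$ satisfies $q^2 = 9 + 18i + 9a = 0 \in R$ while being neither zero, scalar, nor pure. The trace-form route bypasses this difficulty entirely by extracting $x_0$ through linear algebra on the underlying free $R$-module, rather than through any ring-theoretic characterization of purity.
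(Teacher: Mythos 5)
Your reduction to equality of scalar parts and the computation $\mathrm{tr}_R(L_q)=4x_0$ are both correct, and similarity invariance of the trace does give $4x_0=4y_0$. The gap is in the last step: you invoke ``the hypothesis that $n$ is odd,'' but Theorem \ref{teoriso} carries no such hypothesis --- it is stated for an arbitrary commutative unital ring $R$, and the paper genuinely needs it when $2$ is not invertible: Lemma \ref{ceros} treats the case $p=2$ and relies on isomorphisms preserving the trace (Corollary \ref{coriso}), which rests on this theorem. What the conclusion $f(\overline q)=\overline{f(q)}$ requires is exactly $2x_0=2y_0$, and when $2$ is a zero divisor the implication $4(x_0-y_0)=0\Rightarrow 2(x_0-y_0)=0$ fails (take $x_0-y_0=2^{k-2}$ in $\Z{2^k}$ with $k\geq 2$). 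So your argument proves the theorem only over rings in which $2$ is invertible, e.g.\ $R=\Z{n}$ with $n$ odd, and it cannot be repaired within the trace-of-the-regular-representation framework, since that trace inevitably produces the factor $4$.

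For comparison, the paper makes the same initial reduction (to showing $\overline{f(q_0)}=-f(q_0)$ for pure $q_0$) but then argues on basis elements without ever dividing by $2$: writing $f(i)=\alpha_1+q_1$ with $q_1$ pure, the relation $f(i)^2\in R$ forces $2\alpha_1q_1=0$, whence $f(2\alpha_1 i)=2\alpha_1^2\in R$ and injectivity of $f$ forces the pure quaternion $2\alpha_1 i$ to equal the scalar $2\alpha_1^2$, so $2\alpha_1=0$; the conclusion $\overline{f(i)}=-f(i)$ needs only $\alpha_1=-\alpha_1$, not $\alpha_1=0$. Under the added hypothesis $2\in R^{\times}$ your argument is a clean and genuinely different proof --- it extracts the scalar part as one quarter of the trace of left multiplication, bypassing any ring-theoretic characterization of purity exactly as you intend --- but as a proof of the theorem as stated it is incomplete.
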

\begin{proof}
Let $q\in\Big(\frac{a,b}{R}\Big)$ and put $q=x_0+q_0$ with $x_0\in R$ and $q_0$ a pure quaternion. Then, $\overline{q}=x_0-q_0$ and $f(\overline{q})=f(x_0)-f(q_0)=x_0-f(q_0)$. On the other hand, $\overline{f(q)}=\overline{f(x_0+q_0)}=\overline{f(x_0)+f(q_0)}=\overline{x_0+f(q_0)}=x_0+\overline{f(q_0)}$. Hence, in order to prove the result, it is enough to prove that $\overline{f(q_0)}=-f(q_0)$ for every pure quaternion $q_0$.

Let us consider the standard basis $\{1,i,j,k\}$ of $\Big(\frac{a,b}{R} \Big)$. Then, $f(i)=\alpha_1+q_1$ with $\alpha_1\in R$ and $q_1$ a pure quaternion in $\Big(\frac{c,d}{R} \Big)$. Now, since $i^2\in R$ and taking into account that $f$ fixes $R$, we have that $f(i^2)=f(i)^2=(\alpha_1+q_1)^2=\alpha_1^2+q_1^2+2\alpha_1q_1\in R$. Consequently, $2\alpha_1q_1\in R$ (because both $\alpha_1^2$ and $q_1^2$ are in $R$) and since $2\alpha_1q_1$ is a pure quaternion, it must be $2\alpha_1q_1=0$. Thus, $f(2\alpha_1i)=2\alpha_1f(i)=2\alpha_i^2$ and, since $f$ fixes $R$, it follows that $2\alpha_1i=0$ and also that $2\alpha_1=0$. Equivalently, $\alpha_1=-\alpha_1$ and then, $\overline{f(i)}=\alpha_1-q_1=-\alpha_1-q_1=-f(i)$.

In the same way, it can be seen that $\overline{f(j)}=-f(j)$ and $\overline{f(k)}=-f(k)$. Thus, if $q_0=Ai+Bj+Ck$ is a pure quaternion in $\Big(\frac{a,b}{R}\Big)$ we have that:
$$
\overline{f(q_0)}=A\overline{f(i)}+B\overline{f(j)}+C\overline{f(k)}=-Af(i)-Bf(j)-Cf(k)=-f(q_0),
$$
and the result follows.
\end{proof}

Since both the trace and the norm are defined in terms of the conjugation, the following result easily follows from Theorem \ref{teoriso}.

\begin{cor}\label{coriso}
Let $f:\Big(\frac{a,b}{R} \Big) \to \Big(\frac{c,d}{R} \Big)$ be a ring isomorphism. Then, for every $q \in \Big(\frac{a,b}{R} \Big)$ the following hold.
\begin{itemize}
\item[i)] ${\rm tr}\Big( f(q)\Big)={\rm tr}(q)$.
\item[ii)] ${\rm n}\Big( f(q)\Big)={\rm n}(q)$.
\end{itemize}
\end{cor}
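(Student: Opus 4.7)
The plan is to derive both identities directly from Theorem \ref{teoriso} together with the elementary observation that any ring isomorphism $f$ in this setting must fix the base ring $R$. In the cases of interest, $R$ is generated as a ring by $1$, and since $f(1)=1$, it follows that $f(r)=r$ for every $r\in R$; in particular this applies to the trace and norm, which by Definition \ref{trn} lie in $R$.

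For part (i), I would write $\mathrm{tr}(f(q))=f(q)+\overline{f(q)}$ by definition of the trace, then invoke Theorem \ref{teoriso} to replace $\overline{f(q)}$ by $f(\overline{q})$. Additivity of $f$ gives
\[
\mathrm{tr}(f(q))=f(q)+f(\overline{q})=f(q+\overline{q})=f(\mathrm{tr}(q)),
\]
and since $\mathrm{tr}(q)\in R$, the fact that $f$ fixes $R$ yields $f(\mathrm{tr}(q))=\mathrm{tr}(q)$.

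For part (ii), I would proceed analogously, using $\mathrm{n}(f(q))=f(q)\,\overline{f(q)}$ and Theorem \ref{teoriso} to obtain
\[
\mathrm{n}(f(q))=f(q)\,f(\overline{q})=f(q\overline{q})=f(\mathrm{n}(q))=\mathrm{n}(q),
\]
where multiplicativity of $f$ is used in the second equality and again the fact that $\mathrm{n}(q)\in R$ is fixed by $f$ in the last.

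There is no real obstacle here; the whole content of the corollary is packed into Theorem \ref{teoriso}, and the remainder is a one-line manipulation for each part. The only subtlety worth flagging is the implicit use that $f$ acts as the identity on $R$, which is automatic in the present context since the relevant base rings are quotients of $\mathbb{Z}$.
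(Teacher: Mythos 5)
Your argument is exactly the one the paper intends: the paper gives no written proof beyond remarking that the corollary "easily follows" from Theorem \ref{teoriso} because trace and norm are defined via conjugation, and your two displayed computations are precisely that deduction, using the same convention (already invoked in the proof of Theorem \ref{teoriso}) that $f$ fixes $R$. The proposal is correct and takes essentially the same approach.
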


\begin{rem}
Theorem \ref{teoriso} and Corollary \ref{coriso} imply in particular that the conjugate, the trace and the norm of an element are independent from the standard basis of $\Big(\frac{a,b}{R} \Big)$ used to compute them. Moreover, according to Remark \ref{rempur}, Theorem \ref{teoriso} implies that (in the odd characteristic case) every isomorphism preserves pure quaternions.
\end{rem}

\begin{prop}\label{formamat}
Let $R$ be a ring with odd characteristic and Let $f:\Big(\frac{a,b}{R} \Big)\to  \Big(\frac{a,c}{R} \Big)$ be a ring isomorphism. Then, for some pair of standard bases the matrix of $f$ has the form
$$\begin{pmatrix} 1&0&0&0\\0&1&\alpha_1&\alpha_2\\0&0&\beta_1&\beta_2\\0&0&\gamma_1&\gamma_2\end{pmatrix},$$
with $\alpha_1a=\alpha_2a=0$. 
\end{prop}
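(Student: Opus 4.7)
The plan is to break the argument into three parts. The first part, which works for any choice of standard bases, is to establish the zero entries in the first row and first column of the matrix. Since $f$ fixes $R$ we have $f(1)=1$, giving the first column $(1,0,0,0)^T$. By Theorem~\ref{teoriso} combined with Remark~\ref{rempur}, and because $R$ has odd characteristic, $f$ maps pure quaternions to pure quaternions; hence $f(i)$, $f(j)$, $f(k)$ have zero constant term, giving the rest of the first row.

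The hard step is the second: we must choose the standard bases so that $f(i)$ coincides with the second element of the target standard basis. The natural construction is to set $I':=f(i)$, a pure quaternion with $(I')^2=f(i^2)=a$, and then find $J', K'$ completing $\{1,I'\}$ to a standard basis $\{1,I',J',K'\}$ of $\big(\frac{a,c}{R}\big)$. This requires $J'$ to be a pure quaternion satisfying $(J')^2=c$ and $I'J'=-J'I'$. My approach is to look for $J'$ inside the submodule of pure quaternions anticommuting with $I'$; since $f(j)$ and $f(k)$ already lie in this submodule (because $ij+ji=ik+ki=0$), one naturally tries $J' = p\,f(j) + q\,f(k) + \lambda I'$ for suitable $p,q\in R$ and $\lambda\in R$ with $\lambda a=0$ (so that $\lambda I'$ anticommutes with $I'$ and contributes nothing to the square). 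The conditions $(J')^2=c$ and $\{1,I',J',I'J'\}$ being a basis then translate into explicit equations over $R$ in $p,q,\lambda$; showing these are solvable is the main obstacle, and the solvability relies on the norm-form compatibility between $b$ and $c$ forced by the mere existence of the isomorphism.

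Once this second step is secured and we have $f(i)=I'$, the annihilation conditions $\alpha_1 a=\alpha_2 a=0$ are immediate. Indeed, in a standard basis $I'J'+J'I'=0$ and $I'K'+K'I'=0$; writing $f(j)=\alpha_1 I'+\beta_1 J'+\gamma_1 K'$ and expanding $I'f(j)+f(j)I'$ gives $2\alpha_1(I')^2=2\alpha_1 a$. The hypothesis $f(ij+ji)=f(0)=0$ forces $2\alpha_1 a=0$, and odd characteristic lets us divide by $2$ to conclude $\alpha_1 a=0$. The identical calculation with $f(k)$ and the relation $ik+ki=0$ yields $\alpha_2 a=0$. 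In fact the stronger identity $f(k)=f(i)f(j)=I'f(j)$ forces $\alpha_2=0$ outright, together with $\beta_2=\gamma_1 a$ and $\gamma_2=\beta_1$, but the statement only records the weaker conditions $\alpha_i a=0$ that will be used in the sequel.
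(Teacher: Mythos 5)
Your first and third steps are sound and essentially coincide with the paper's. The zeros in the first row and column follow from $f(1)=1$ together with Theorem \ref{teoriso} and Remark \ref{rempur} (odd characteristic forces $f$ to preserve pure quaternions), and your derivation of $\alpha_1a=\alpha_2a=0$ is correct: the paper gets it by noting that $f(k)=f(i)f(j)=\alpha_1a+\gamma_1aJ+\beta_1K$ must be pure, while you get it from $f(ij+ji)=0$ and invertibility of $2$; your additional observation that $\alpha_2=0$, $\beta_2=\gamma_1a$, $\gamma_2=\beta_1$ actually hold is also right.

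The problem is your second step, which is where the entire content of the proposition lives: one must show that $\{1,f(i)\}$ extends to a standard basis $\{1,I',J',K'\}$ of $\Big(\frac{a,c}{R}\Big)$. You set up the ansatz $J'=p\,f(j)+q\,f(k)+\lambda I'$ and then explicitly concede that ``showing these are solvable is the main obstacle,'' gesturing at a ``norm-form compatibility'' that you never make precise. That is a genuine gap, not a stylistic omission: your ansatz reduces (after the cross terms cancel and $\lambda^2a=\lambda\cdot\lambda a=0$) to solving $p^2b-q^2ab=c$ in $R$ subject to the change-of-basis matrix being invertible, and nothing in your write-up establishes that such $p,q,\lambda$ exist. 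The paper does not solve this equation either; it avoids the issue by taking the subring $S$ generated by $\{1,f(i)\}$ (using $f(i)^2=a$) and applying the Cayley--Dickson doubling process of \cite{SCH} to $S$ and $c$ to produce the extension $\{1,I:=f(i),J,K\}$ directly. So the decisive step of your argument is missing rather than merely carried out differently; to repair it you would either have to prove solvability of your system or invoke the Cayley--Dickson construction as the paper does.
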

\begin{proof}
Let $\{1,i,j,k\}$ be any standard basis in $\Big(\frac{a,b}{R} \Big)$. Since $f(i)^2=f(i^2)=a$, let us consider $S$ the subring of $\Big(\frac{a,c}{R} \Big)$ generated by $\{1,f(i)\}$. Now, we apply the Cayley-Dickson process \cite{SCH} to $S$ and $c$ so that we extend $\{1,f(i)\}$ to an standard basis $\{1,I:=f(i),J,K\}$ of $\Big(\frac{a,c}{R} \Big)$. 

Now, since $R$ has odd characteristic, $f$ preserves pure quaternions. Thus, $f(j)=\alpha_1I+\beta_1J+\gamma_1K$ and $f(k)=\alpha_2I+\beta_2J+\gamma_2K$.

Finally, $f(k)=f(ij)=f(i)f(j)=I(\alpha_1I+\beta_1J+\gamma_1K)=\alpha_1a+\beta_1K+\gamma_1aJ$ must be a pure quaternion and hence $\alpha_1a=0$. In the same way it can be seen that $\alpha_2a=$ and the result follows.
\end{proof}

In what follows, we will be interested in determining whether two different quaternion rings are isomorphic or not. The following isomorphism, which is well-known if $a$ and $b$ are units, also holds in our setting. The proof is straightforward.

\begin{Lemma} \label{orden} 
Let $a,b \in R$. Then, 
$$\Big(\frac{ a, b}{R}\Big) \cong \Big(\frac{ b, a}{R}\Big).$$
\end{Lemma}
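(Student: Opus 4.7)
The plan is to exhibit an explicit isomorphism by swapping the roles of $i$ and $j$, with a compensating sign on $k$. Let $\{1,i,j,k\}$ be the standard basis of $\Big(\frac{a,b}{R}\Big)$ and let $\{1,I,J,K\}$ denote the standard basis of $\Big(\frac{b,a}{R}\Big)$, so that $I^{2}=b$, $J^{2}=a$ and $IJ=-JI=K$. I would define the $R$-linear map
$$f:\Big(\frac{a,b}{R}\Big)\longrightarrow \Big(\frac{b,a}{R}\Big)$$
by setting $f(1)=1$, $f(i)=J$, $f(j)=I$ and $f(k)=-K$, and then extending linearly. Since $f$ sends a free $R$-basis to a free $R$-basis (up to sign), it is automatically an $R$-module isomorphism.

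Next I would verify that $f$ respects multiplication. Because both multiplications are $R$-bilinear and associative, it is enough to check the four defining products on basis vectors. Specifically:
\begin{align*}
f(i)^{2}&=J^{2}=a=f(i^{2}),\\
f(j)^{2}&=I^{2}=b=f(j^{2}),\\
f(i)f(j)&=JI=-K=f(k)=f(ij),\\
f(j)f(i)&=IJ=K=-f(k)=f(-k)=f(ji).
\end{align*}
All remaining products among $\{1,i,j,k\}$ are consequences of these via associativity, so $f$ is a ring homomorphism, and hence a ring isomorphism.

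I do not expect any genuine obstacle here; the only thing to be careful about is the sign on $k$, which is forced by the fact that swapping $i$ and $j$ reverses the orientation of the product $ij$. Note that this argument makes no use of $a$ or $b$ being units, so it remains valid in the generalized setting of the paper.
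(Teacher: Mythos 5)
Your proof is correct and is exactly the "straightforward" argument the paper alludes to (it omits the details): the swap $i\mapsto J$, $j\mapsto I$, $k\mapsto -K$ sends a standard basis to a basis, and the sign on $k$ makes the relations $i^2=a$, $j^2=b$, $ij=-ji=k$ match, with all other products forced by bilinearity and associativity. No issues.
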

 
Nevertheless, some other easy isomorphisms that hold in the case when $a$ and $b$ are units, like 
\begin{equation}\label{noiso} \Big(\frac{ a, b  }{R}\Big) \cong \Big(\frac{ a, -a b  }{R}\Big) \cong \Big(\frac{ b, -a b  }{R}\Big) \end{equation}
are, as we will see, no longer generally true in our setting.

\section{Some results regarding $\Big(\frac{a,b}{\mathbb{Z}/p^k\mathbb{Z}}\Big)$ for a prime $p$}

Throughout this section $p$ will denote any prime. The next two results present some isomorphisms that will be useful in forthcoming sections. The first one (Lemma \ref{menos}) is, in some sense, an analogue to the classical isomorphism (\ref{noiso}). The second one (Lemma \ref{modulo}) presents some kind of descent principle.

\begin{Lemma}\label{menos} 
Let $a$ and $b$ be integers with $\gcd(a,p)=1$. Then,
$$\Big(\frac{ a, b p^s}{\mathbb{Z}/p^k\mathbb{Z}}\Big) \cong \Big(\frac{ a, -a b p^s}{\mathbb{Z}/p^k\mathbb{Z}}\Big).$$
\end{Lemma}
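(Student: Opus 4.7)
The plan is to mimic the classical proof, in which one exploits the fact that if $\{1,i,j,k\}$ is a standard basis of $\big(\frac{a,b}{R}\big)$ then $\{1,i,\,ij,\,i\cdot ij\}$ is (up to relabelling) a standard basis of $\big(\frac{a,-ab}{R}\big)$. The coprimality hypothesis $\gcd(a,p)=1$ is exactly what is needed to make this change of basis work over $\mathbb{Z}/p^k\mathbb{Z}$, since $a$ is then a unit of the ring.

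Concretely, I would start with a standard basis $\{1,i,j,k\}$ of $\big(\frac{a,bp^s}{\mathbb{Z}/p^k\mathbb{Z}}\big)$ and set
\[
I := i, \qquad J := ij = k, \qquad K := IJ.
\]
A direct computation shows $I^2 = a$,
\[
J^2 = (ij)(ij) = i(ji)j = -i^2 j^2 = -a\cdot bp^s = -abp^s,
\]
and $JI = (ij)i = i(ji) = -i^2j = -aj$, while $IJ = i(ij) = aj$, so $IJ=-JI$. Therefore the triple $(I,J,K)$ satisfies the defining relations of a standard basis of $\big(\frac{a,-abp^s}{\mathbb{Z}/p^k\mathbb{Z}}\big)$.

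The step that requires the hypothesis is proving that $\{1,I,J,K\}=\{1,i,k,aj\}$ is actually a free $\mathbb{Z}/p^k\mathbb{Z}$-basis of $H(\mathbb{Z}/p^k\mathbb{Z})$. The matrix expressing these four elements in the basis $\{1,i,j,k\}$ is
\[
\begin{pmatrix} 1 & 0 & 0 & 0 \\ 0 & 1 & 0 & 0 \\ 0 & 0 & 0 & a \\ 0 & 0 & 1 & 0 \end{pmatrix},
\]
whose determinant is $-a$. Since $\gcd(a,p)=1$, the element $a$ is a unit in $\mathbb{Z}/p^k\mathbb{Z}$, so this matrix is invertible and $\{1,I,J,K\}$ is indeed a free basis. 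This is the sole place where coprimality is used, and it is the main (mild) obstacle; without it the image would fail to span.

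To finish, I would define the $\mathbb{Z}/p^k\mathbb{Z}$-linear map
\[
\varphi:\Big(\frac{a,-abp^s}{\mathbb{Z}/p^k\mathbb{Z}}\Big)\longrightarrow \Big(\frac{a,bp^s}{\mathbb{Z}/p^k\mathbb{Z}}\Big)
\]
by sending a standard basis $\{1,i',j',k'\}$ of the domain to $\{1,I,J,K\}$ respectively. Since $\{1,I,J,K\}$ is a free basis, $\varphi$ is a bijective module map; since the images satisfy the same relations that define the multiplication on the domain, $\varphi$ is a ring homomorphism. Hence it is the desired isomorphism.
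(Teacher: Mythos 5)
Your proposal is correct and is essentially the paper's own proof: the paper defines exactly the same map ($I\mapsto i$, $J\mapsto k$, $K\mapsto aj$) and justifies bijectivity by the invertibility of the same coordinate matrix, which holds since $\gcd(a,p)=1$ makes $a$ a unit. You simply spell out the verification of the relations in more detail than the paper does.
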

\begin{proof}
Let us consider standard bases $\{1,i,j,k\}$ and $\{1,I,J,K\}$ of $\Big(\frac{ a, b p^s}{\mathbb{Z}_{p^k}}\Big)$ and $\Big(\frac{ a, -a b p^s}{\mathbb{Z}_{p^k}}\Big)$, respectively. Then it is enough to consider the ring homomorphism $f$ induced by $f(1)=1$, $f(I)=i$, $f(J)=k$ and $f(K)=aj$; which is bijective because its coordinate matrix
$$\begin{pmatrix} 1&0&0&0\\0&1&0&0\\0&0&0&a\\0&0&1&0\end{pmatrix}$$
is regular over $\mathbb{Z}/p^k\mathbb{Z}$.
\end{proof}

\begin{Lemma} \label{modulo}
Let $a_i$ ($1\leq i\leq 4$) and $k\geq 1$ be integers such that $$ \Big(\frac{a_1,a_2}{\mathbb{Z}/p^k\mathbb{Z}}\Big) \cong \Big(\frac{a_3,a_4}{\mathbb{Z}/p^k\mathbb{Z}}\Big)$$
and let $s\leq k$. If $a_i\equiv a'_i\pmod  {p^s}$ for every $1\leq i\leq 4$, then
$$ \Big(\frac{a'_1,a'_2}{\mathbb{Z}/p^s\mathbb{Z}}\Big) \cong \Big(\frac{a'_3,a'_4}{\mathbb{Z}/p^s\mathbb{Z}}\Big)$$
\end{Lemma}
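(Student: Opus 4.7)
The plan is to realize the quaternion ring over $\mathbb{Z}/p^s\mathbb{Z}$ as a quotient of the corresponding quaternion ring over $\mathbb{Z}/p^k\mathbb{Z}$ and then push the given isomorphism through this quotient.

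First I would observe a harmless simplification: since the multiplication rules $I^2=a'_i$ and $I^2=a_i$ define the \emph{same} ring when interpreted over $\mathbb{Z}/p^s\mathbb{Z}$ (because $a_i\equiv a'_i\pmod{p^s}$), we have
$$\Big(\tfrac{a'_1,a'_2}{\mathbb{Z}/p^s\mathbb{Z}}\Big)=\Big(\tfrac{a_1,a_2}{\mathbb{Z}/p^s\mathbb{Z}}\Big),\qquad \Big(\tfrac{a'_3,a'_4}{\mathbb{Z}/p^s\mathbb{Z}}\Big)=\Big(\tfrac{a_3,a_4}{\mathbb{Z}/p^s\mathbb{Z}}\Big).$$
So we are reduced to proving the statement with $a'_i$ replaced by $a_i$ throughout, i.e.\ that the given isomorphism survives reduction from level $k$ to level $s$.

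Next I would introduce, for any pair $(a,b)$, the coefficient-wise reduction map
$$\pi\colon\Big(\tfrac{a,b}{\mathbb{Z}/p^k\mathbb{Z}}\Big)\longrightarrow\Big(\tfrac{a,b}{\mathbb{Z}/p^s\mathbb{Z}}\Big),\qquad x_0+x_1i+x_2j+x_3k\longmapsto \overline{x_0}+\overline{x_1}i+\overline{x_2}j+\overline{x_3}k,$$
where the bar denotes reduction $\bmod\,p^s$. This is a surjective ring homomorphism whose kernel is precisely the (two-sided) ideal $p^sA$, where I write $A_1=\big(\tfrac{a_1,a_2}{\mathbb{Z}/p^k\mathbb{Z}}\big)$ and $A_2=\big(\tfrac{a_3,a_4}{\mathbb{Z}/p^k\mathbb{Z}}\big)$.

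The main point is then to show that the given isomorphism $f\colon A_1\to A_2$ carries $p^sA_1$ onto $p^sA_2$. This is immediate from the fact that any ring homomorphism fixing $1$ is automatically $\mathbb{Z}$-linear: for every $q\in A_1$,
$$f(p^s q)=f(\underbrace{q+\cdots+q}_{p^s})=p^s f(q)\in p^sA_2,$$
and surjectivity of $f$ shows that every element of $p^sA_2$ arises this way. Hence $f(p^sA_1)=p^sA_2$, and so $f$ descends to a ring isomorphism of quotients
$$\bar f\colon A_1/p^sA_1\;\longrightarrow\; A_2/p^sA_2.$$
Finally I identify, via $\pi$, these quotients with the quaternion rings $\big(\tfrac{a_1,a_2}{\mathbb{Z}/p^s\mathbb{Z}}\big)$ and $\big(\tfrac{a_3,a_4}{\mathbb{Z}/p^s\mathbb{Z}}\big)$ respectively, which by the initial observation are exactly $\big(\tfrac{a'_1,a'_2}{\mathbb{Z}/p^s\mathbb{Z}}\big)$ and $\big(\tfrac{a'_3,a'_4}{\mathbb{Z}/p^s\mathbb{Z}}\big)$.

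There is no real obstacle here; the only point that might require a moment of thought is that ring isomorphisms in our setting are $\mathbb{Z}$-linear (but not a priori $R$-linear), and so automatically commute with multiplication by the integer $p^s$, which is what makes the descent to the quotient legitimate.
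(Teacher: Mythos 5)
Your proof is correct, but it packages the reduction step differently from the paper. The paper's proof is coordinate-based: it takes the matrix $A$ of the isomorphism $f$ with respect to standard bases, observes that $A$ remains invertible over $\Z{p^s}$, and asserts that the linear map defined by $A$ over $\Z{p^s}$ is again a ring isomorphism because $a_i\equiv a'_i\pmod{p^s}$. You instead work intrinsically: you identify $\Big(\frac{a_i,a_j}{\Z{p^s}}\Big)$ with the quotient of $\Big(\frac{a_i,a_j}{\Z{p^k}}\Big)$ by the two-sided ideal $p^s A$, check that $f$ carries $p^sA_1$ onto $p^sA_2$ (using only additivity and surjectivity), and let $f$ descend. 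The two arguments encode the same idea --- reduction modulo $p^s$ preserves isomorphism --- but yours buys a cleaner justification of the one point the paper leaves implicit, namely why the reduced map is still multiplicative: in your version this is automatic from the universal property of quotients, whereas the paper's one-line assertion tacitly requires rechecking the structure constants. The small price you pay is the routine verification that the coefficientwise reduction $\pi$ is a surjective ring homomorphism with kernel exactly $p^sA$, which you state correctly. Both proofs also rely on the isomorphism being $\mathbb{Z}$-linear, which holds for any additive map; your remark that $R$-linearity is not needed is accurate (though over $\Z{p^k}$ any unital ring homomorphism is in fact $R$-linear, since $R$ is generated by $1$).
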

\begin{proof} 
Let $f$ be an isomorphism between $\Big(\frac{a_1,a_2}{\mathbb{Z}/p^k\mathbb{Z}}\Big)$ and $\Big(\frac{a_3,a_4}{\mathbb{Z}/p^k\mathbb{Z}}\Big)$. If $A$ is the coordinate matrix of $f$ with respect to some standard bases, it is obvious that $A$ is regular over $\Z{p^k}$ and, consequently, also over $\Z{p^s}$.

Then, the linear map $g$ between $\Big(\frac{a'_1,a'_2}{\mathbb{Z}/p^s\mathbb{Z}}\Big)$ and $\Big(\frac{a'_3,a'_4}{\mathbb{Z}/p^s\mathbb{Z}}\Big)$ defined by the matrix $A$ with respect to some standard bases, induces an ring isomorphism because $a_i\equiv a'_i\pmod{p^s}$ for every $i$.
\end{proof}

It is also interesting, and often harder, to determine whether two quaternion rings are not isomorphic. The following results go in this direction.

\begin{Lemma}
\label{ceros} 
Let $p$ be a prime and consider integers $a$, $b$ and $c$ coprime to $p$. Also, let $0\leq s\leq r <k$. Then, the quaternion rings $R_1$, $R_2$ and $R_3$ defined by
$$R_1=\Big(\frac{a p^s,b p^r}{\mathbb{Z}/p^k\mathbb{Z}}\Big),\ R_2=\Big(\frac{c p^{s },0}{\mathbb{Z}/p^k\mathbb{Z}}\Big),\ R_3=\Big(\frac{0,0}{\mathbb{Z}/p^k\mathbb{Z}}\Big)$$
are pairwise non-isomorphic.
\end{Lemma}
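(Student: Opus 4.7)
The plan is to identify a ring-theoretic invariant that separates the three quotients. I will use the cardinality of $C(R)$, the additive subgroup of $(R,+)$ generated by all commutators $xy-yx$; this is manifestly preserved by any ring isomorphism.

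For a generic quaternion ring $\Big(\frac{\alpha,\beta}{\Z{p^k}}\Big)$ I compute $C(R)$ directly. Scalars are central, so it suffices to commute pure quaternions, and using $i^2=\alpha$, $j^2=\beta$, $k^2=-\alpha\beta$, $ik=-ki=\alpha j$ and $jk=-kj=-\beta i$ a straightforward expansion yields
\[
[x_1i+x_2j+x_3k,\,y_1i+y_2j+y_3k]=2\beta(x_3y_2-x_2y_3)\,i+2\alpha(x_1y_3-x_3y_1)\,j+2(x_1y_2-x_2y_1)\,k.
\]
The three basic commutators $[j,k]=-2\beta i$, $[i,k]=2\alpha j$ and $[i,j]=2k$ already hit the three coordinate directions separately, so the additive subgroup they generate, which is all of $C(R)$ by the formula, equals $2\beta\Z{p^k}\cdot i\oplus 2\alpha\Z{p^k}\cdot j\oplus 2\Z{p^k}\cdot k$. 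In the odd-$n$ setting of the paper $2$ is a unit in $\Z{p^k}$, so $|C(R)|=|\alpha\Z{p^k}|\cdot|\beta\Z{p^k}|\cdot p^k$. Plugging in, and using that $a,b,c$ are coprime to $p$, I obtain $|C(R_1)|=p^{k-s}\cdot p^{k-r}\cdot p^k=p^{3k-s-r}$, $|C(R_2)|=p^{k-s}\cdot 1\cdot p^k=p^{2k-s}$ and $|C(R_3)|=p^k$. The hypotheses $0\leq s\leq r<k$ immediately give $3k-s-r>2k-s>k$ (equivalent to $k>r$ and $k>s$), so the three cardinalities are pairwise distinct and hence $R_1,R_2,R_3$ are pairwise non-isomorphic.

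The only point requiring real care is the identification of $C(R)$: an individual commutator realizes the cross-product-type triple $(x_3y_2-x_2y_3,\,x_1y_3-x_3y_1,\,x_1y_2-x_2y_1)$, which by itself does not exhaust $(\Z{p^k})^3$; the key observation is that the three basic commutators already hit each coordinate axis, so their additive closure is the full direct sum. A conceptually equivalent alternative would be to apply Corollary~\ref{coriso} to the bilinear form $B(q_1,q_2)=\mathrm{n}(q_1+q_2)-\mathrm{n}(q_1)-\mathrm{n}(q_2)$, whose matrix in a standard basis is $\operatorname{diag}(2,-2\alpha,-2\beta,2\alpha\beta)$; its Smith normal form over $\Z{p^k}$ distinguishes the three rings in exactly the same way.
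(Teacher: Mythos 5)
Your argument is correct for odd $p$ and takes a genuinely different route from the paper. The paper's proof works with the sets $\mathbb{P}_i=\{q\in R_i:\operatorname{tr}(q)=0\}$ (the pure quaternions when $p$ is odd, preserved by isomorphisms via Theorem \ref{teoriso}) and separates the rings by examining which of $\mathbb{P}_i$ and $\mathbb{P}_i\setminus p\mathbb{P}_i$ contain elements of zero square. Your commutator group $C(R)$ is in some ways a cleaner invariant: it is preserved by any ring isomorphism for trivial reasons, with no appeal to the conjugation machinery, and it produces three pairwise distinct numbers $p^{3k-s-r}>p^{2k-s}>p^{k}$ in one stroke instead of separate pairwise comparisons. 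Your computation checks out, including the point you rightly flag: a single commutator only realizes cross-product triples, but the additive closure of $[tj,k]$, $[ti,k]$, $[ti,j]$ for $t\in\Z{p^k}$ is indeed all of $2\beta\Z{p^k}\,i\oplus 2\alpha\Z{p^k}\,j\oplus 2\Z{p^k}\,k$, and the exponent inequalities are exactly $r<k$ and $s<k$, which are the hypotheses.

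The one genuine gap is the prime $2$. The lemma is stated for an arbitrary prime $p$, and the paper's proof explicitly covers $p=2$ by replacing the pure quaternions with the trace-zero set $\{\alpha 2^{k-1}+q_0\}$; your argument, by contrast, needs $2$ to be a unit. When $p=2$ the factor $2$ in every commutator costs a power of $2$ and the invariant degenerates: one gets $|C(R)|=2^{\max(k-r-1,0)}\cdot 2^{\max(k-s-1,0)}\cdot 2^{k-1}$, so $|C(R_1)|=|C(R_2)|$ whenever $r=k-1$, and for $k=1$ all three rings are commutative with $C(R_i)=0$. Since the lemma is only applied downstream for odd $p$ (Theorem \ref{TEORPP}), this does not damage the paper's main results, but as a proof of the lemma as stated it is incomplete; you should either restrict the statement to odd $p$ or supply a separate argument for $p=2$ (your alternative via the Gram matrix $\operatorname{diag}(2,-2\alpha,-2\beta,2\alpha\beta)$ has the same factor-of-$2$ problem, so it does not rescue that case either).
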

\begin{proof}
For each $i\in\{1,2,3\}$ let us define the set $\mathbb{P}_i:=\{q \in R_i: {\rm tr}(q)=0\}$. 

Note that, if $p$ is odd, then $\mathbb{P}_i$ is precisely the set of pure quaternions and is hence preserved by isomorphisms. Now, for every element $q\in \mathbb{P}_3$ it holds that $q^2=0$, while $\mathbb{P}_1$ and $\mathbb{P}_2$ contain elements whose square is non-zero. This implies that $R_3$ is not isomorphic to $R_1$ or $R_2$. On the other hand, the set $\mathbb{P}_2\setminus p\mathbb{P}_2$ contains elements with zero square while this is not the case for $\mathbb{P}_1\setminus p\mathbb{P}_1$. This implies that $R_1$ and $R_2$ are not isomorphic.

Finally, if $p=2$, then $\mathbb{P}_i=\{\alpha 2^{k-1}+q_0: \alpha \textrm{ is odd and } q_0  \textrm{ is a pure quaternion}\}$ and we can reason in the same way.
\end{proof}

\begin{Lemma} \label{mismosordenes}  
Let $p$ be a prime and consider integers $a$, $b$, $c$ and $d$ coprime to $p$. Also, let $s_1\leq s_2\leq k$ and $s_3 \leq s_4\leq k$ and assume that either $s_1\neq s_3$ or $s_2\neq s_4$. Then
$$\Big(\frac{ap^{s_1} ,bp^{s_2}}{\Z{p^k}}\Big) \ncong \Big(\frac{cp^{s_3} ,dp^{s_4}}{\Z{p^k}}\Big) $$
\end{Lemma}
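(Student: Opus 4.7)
The plan is to attach to each ring an isomorphism invariant that detects the ordered pair $(s_1,s_2)$. I will focus on $p$ odd, which is the setting required by the paper's main results; the case $p=2$ can be handled by a parallel argument using the description of the trace-zero set given in the proof of Lemma \ref{ceros}. The invariant I will use is the symmetric $R$-bilinear form $B(q,q'):=qq'+q'q$ defined on the submodule $P$ of pure quaternions. In odd characteristic $P$ coincides with the kernel of the trace, and both the trace and the multiplication are preserved by any ring isomorphism (Corollary \ref{coriso}), so the bilinear module $(P,B)$ is intrinsic; the polarization identity $B(q,q')=(q+q')^2-q^2-q'^2$ shows that $B$ takes values in $R$, so $(P,B)$ is a rank-three symmetric bilinear module over $\Z{p^k}$.

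The core computation is then immediate. In the standard basis $\{i,j,k\}$, the relations $ij=-ji$ and $k^2=-a_1a_2$ make every off-diagonal Gram entry vanish, so for $R_1=\Big(\frac{ap^{s_1},bp^{s_2}}{\Z{p^k}}\Big)$ the Gram matrix of $B$ equals $\mathrm{diag}(2ap^{s_1},2bp^{s_2},-2abp^{s_1+s_2})$. Since $p$ is odd and $a,b$ are coprime to $p$, the coefficients $2a$, $2b$, $-2ab$ are units in $\Z{p^k}$, so the invariant factors of this matrix are $p^{s_1}$, $p^{s_2}$, $p^{\min(k,s_1+s_2)}$, already written in increasing order of divisibility because $s_1\le s_2\le s_1+s_2$ and $s_2\le k$. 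The analogous list for $R_2$ is $p^{s_3}$, $p^{s_4}$, $p^{\min(k,s_3+s_4)}$. Matching invariant factors term by term forces $s_1=s_3$ and $s_2=s_4$, contradicting the hypothesis.

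The only subtle point is the boundary case $s_1+s_2\ge k$, where the third invariant factor collapses to $0$; but since $s_1,s_2\le k$ in all cases, the first two entries of the sorted list still recover $s_1$ and $s_2$ unambiguously, so the argument goes through without modification.
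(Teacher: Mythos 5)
Your argument is correct for odd $p$, and it takes a genuinely different route from the paper's. The paper argues by contradiction: assuming an isomorphism, it reduces modulo a suitable intermediate power of $p$ (Lemma \ref{modulo}) so that one of the two parameters becomes $0$ on one side but not on the other, and then invokes Lemma \ref{ceros}; this forces a five-way case split according to the relative order of $s_1,s_2,s_3,s_4$. You instead package everything into a single computable invariant: the symmetric bilinear form $qq'+q'q$ on the rank-three module of pure quaternions, whose Gram matrix in the standard basis is $\mathrm{diag}(2ap^{s_1},2bp^{s_2},-2abp^{s_1+s_2})$, so that the multiset of Smith exponents $\{s_1,s_2,\min(k,s_1+s_2)\}$ recovers the ordered pair $(s_1,s_2)$ in one stroke. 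All the ingredients are legitimately available: a ring isomorphism of $\Z{p^k}$-algebras is automatically $\Z{p^k}$-linear, preserves pure quaternions in odd characteristic (Theorem \ref{teoriso} and Remark \ref{rempur}), and fixes the scalar values of $B$; and congruent Gram matrices have isomorphic cokernels, hence the same exponent multiset. This is cleaner than the paper's argument, avoids the case analysis entirely, and as a by-product reproves Lemma \ref{ceros} for odd $p$.

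The one real weakness is the treatment of $p=2$. The lemma is stated for every prime, and the paper's proof does cover $p=2$ via the modified description of the trace-zero set in Lemma \ref{ceros}. Your invariant does not transfer ``in parallel'': for $p=2$ the trace-zero set is $2^{k-1}R+Ri+Rj+Rk$, which is not a free rank-three module, and the factor $2$ in the diagonal entries $2a\cdot 2^{s_1}$ is no longer a unit, so the Smith exponents degrade to $\min(k,s_i+1)$ and, for instance, no longer distinguish $s_2=k-1$ from $s_2=k$. Since the paper only ever applies this lemma for odd $p$, nothing downstream is affected, but as a proof of the lemma as stated your argument is incomplete at $p=2$ and would need a separate argument (essentially the paper's) there.
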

\begin{proof} 
Let us assume that both rings are isomorphic. Without loss of generality, we can also assume that $s_1 \leq s_3$. Five different situations arise:
\begin{itemize}
\item[i)]
If $s_1=s_3=s_2 < s_4$, then Lemma \ref{modulo} implies that
$$\Big(\frac{ap^{s_1} ,bp^{s_1}}{\Z{p^{s_4}}}\Big) \cong \Big(\frac{cp^{s_1}, 0}{\Z{p^{s_4}}}\Big),$$
which contradicts Lemma \ref{ceros}.
\item[ii)]
If $s_1=s_3<s_2<s_4$, then due to Lemma \ref{modulo} we have that
$$\Big(\frac{ap^{s_1} ,bp^{s_2}}{\Z{p^{s_4}}}\Big) \cong \Big(\frac{cp^{s_1}, 0}{\Z{p^{s_4}}}\Big),$$
which contradicts Lemma \ref{ceros}.
\item[iii)]
If $s_1=s_2<  s_3 $, by Lemma \ref{modulo} we have that
$$\Big(\frac{ap^{s_1} ,bp^{s_1}}{\Z{p^{s_3}}}\Big) \cong \Big(\frac{0, 0}{\Z{p^{s_3}}}\Big),$$
which contradicts Lemma \ref{ceros} again.
\item[iv)]
If $s_1<s_2\leq s_3$, Lemma \ref{modulo} implies that
$$\Big(\frac{ap^{s_1} ,0}{\Z{p^{s_2}}}\Big) \cong \Big(\frac{0, 0}{\Z{p^{s_2}}}\Big),$$
contradicting Lemma \ref{ceros}.
\item[v)]
If $s_1<s_3\leq s_2$, Lemma \ref{modulo} leads to
$$\Big(\frac{ap^{s_1} ,0}{\mathbb{Z}_{p^{s_3}}}\Big) \cong \Big(\frac{0, 0}{\Z{p^{s_3}}}\Big),$$
which is a contradiction due to Lemma \ref{ceros}.
\end{itemize}
Hence, in any case we reach a contradiction and the result follows.
\end{proof}

\section{Quaternions over $\Z{p^k}$ for an odd prime $p$}

This section is devoted to determine the number of non-isomorphic generalized quaternion rings over $\Z{p^k}$ for an odd prime $p$. Hence, throughout this section $p$ will be assumed to be an odd prime.

\begin{Lemma} \label{mismoresiduo} 
Let $s$ and $t$ be integers coprime to $p$ such that $st$ is a quadratic residue modulo $p$ and let $m$ be any integer. Then, for every $r\geq 0$,
$$R=\Big(\frac{t p^r,m}{\Z{p^k}}\Big)\cong\Big(\frac{s p^r,m}{\Z{p^k}}\Big)=S.$$
\end{Lemma}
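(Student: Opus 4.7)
My plan is to explicitly construct an isomorphism $f\colon S\to R$ (or equivalently $R\to S$) using a clever rescaling of the generator $I$. Fix standard bases $\{1,i,j,k\}$ of $S=\Big(\frac{sp^r,m}{\Z{p^k}}\Big)$ and $\{1,I,J,K\}$ of $R=\Big(\frac{tp^r,m}{\Z{p^k}}\Big)$. Since $st$ is a quadratic residue modulo $p$ and both $s$ and $t$ are units modulo $p^k$, the quotient $ts^{-1}=st\cdot s^{-2}$ is also a quadratic residue modulo $p$. Because $p$ is odd, Hensel's lemma lifts this square root: there exists a unit $u\in\Z{p^k}$ with $u^{2}\equiv ts^{-1}\pmod{p^k}$, equivalently $u^{2}s\equiv t\pmod{p^k}$.

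Next, I would define the $\Z{p^k}$-linear map $f\colon S\to R$ by
\[
f(1)=1,\qquad f(i)=uI,\qquad f(j)=J,\qquad f(k)=uK.
\]
The coordinate matrix with respect to these bases is the diagonal matrix $\operatorname{diag}(1,u,1,u)$, whose determinant $u^{2}$ is a unit in $\Z{p^k}$; hence $f$ is an $R$-module isomorphism. Verifying that $f$ respects multiplication reduces to checking the defining relations: $f(i)^{2}=u^{2}I^{2}=u^{2}\,tp^{r}\cdot\,$ wait, $I^{2}=tp^{r}$, so $f(i)^{2}=u^{2}tp^{r}$; but I want this equal to $f(i^{2})=f(sp^{r})=sp^{r}$, so actually the congruence I need is $u^{2}t\equiv s\pmod{p^{k-r}}$, not the other direction. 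I would therefore choose $u$ with $u^{2}\equiv st^{-1}\pmod{p^k}$ (again possible by Hensel, since $st^{-1}$ is a QR mod $p$), and then $f(i)^{2}=u^{2}tp^{r}\equiv sp^{r}=f(i^{2})\pmod{p^k}$. The relation $f(j)^{2}=J^{2}=m=f(j^{2})$ is immediate, and finally $f(i)f(j)=uIJ=uK=f(k)$ while $f(j)f(i)=uJI=-uK=-f(k)$, as required.

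Thus $f$ is a bijective ring homomorphism and $S\cong R$. The only non-routine ingredient is producing the scalar $u$, for which I rely on Hensel's lemma in the form: a unit of $\Z{p^k}$ (for odd $p$) is a square if and only if its reduction modulo $p$ is a quadratic residue. Conceptually, this is the main obstacle, but it is a completely standard fact and the construction of $f$ is entirely analogous to the well-known isomorphism $\Big(\frac{a,b}{F}\Big)\cong\Big(\frac{u^{2}a,b}{F}\Big)$ for a field $F$. No other subtlety arises because $J^{2}=m$ is untouched by the rescaling of the first generator.
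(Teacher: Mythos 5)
Your construction is correct and is essentially identical to the paper's proof: both rescale the first generator by a square root $u$ of $st^{-1}$ (equivalently $ts^{-1}$, depending on direction), obtained from the fact that a unit is a square modulo $p^k$ for odd $p$ iff it is a quadratic residue modulo $p$, and use the diagonal matrix $\mathrm{diag}(1,u,1,u)$. The mid-proof correction of which ratio to take the square root of does not affect the validity of the final argument.
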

\begin{proof} 
Since $\gcd(st,p)=1$, it follows that $st$ is also a quadratic residue modulo $p^k$ so let $x$ be an integer such that $x^2 \equiv t s^{-1} \pmod {p^k}$. Let us consider $\{1,i,j,k\}$ and $\{1,I,J,K\}$ standard bases of $R$ and $S$, respectively. Then, the linear map whose matrix with respect to these bases is
$$A=\begin{pmatrix} 1&0&0&0\\0&x&0&0\\0&0&1&0\\0&0&0&x\end{pmatrix}$$
induces an isomorphism because $(xI)^2=x^2I^2\equiv ts^{-1}sp^r\equiv tp^r\pmod{p^k}$ and $A$ is regular over $\Z{p^k}$.
\end{proof}

\begin{Lemma} \label{mismoresiduo2} 
Let $s$ be an integer such that $\gcd(p,s)=1$. Then, for every $r\geq 0$,
$$R=\Big(\frac{ p^r, p^r}{\Z{p^k}}\Big)\cong\Big(\frac{s p^r,s p^r}{\Z{p^k}}\Big)=S$$
\end{Lemma}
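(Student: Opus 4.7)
The plan is to produce an explicit ring isomorphism $f \colon S \to R$ via a suitable change of basis, mimicking the structure of the proof of Lemma~\ref{mismoresiduo} but in a way that accommodates the case when $s$ fails to be a quadratic residue modulo $p$. If $r \geq k$ then both rings collapse to $\Big(\frac{0,0}{\Z{p^k}}\Big)$ and the statement is trivial, so I assume from now on that $r < k$.

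Let $\{1,i,j,k\}$ and $\{1,I,J,K\}$ be standard bases of $R$ and $S$ respectively. I would seek the image of $I$ in $R$ as a linear combination $xi+yj$ and take the image of $J$ to be the orthogonal combination $-yi+xj$ so that anticommutation is automatic. A direct computation using $i^2=j^2=p^r$ and $ij=-ji$ yields, for any $x,y \in \Z{p^k}$, the identities
$$(xi+yj)^2=(-yi+xj)^2=(x^2+y^2)p^r \quad\text{and}\quad (xi+yj)(-yi+xj)=-(-yi+xj)(xi+yj)=(x^2+y^2)k.$$
Therefore the linear map determined by $I\mapsto xi+yj$, $J\mapsto -yi+xj$, $K\mapsto (x^2+y^2)k$ (fixing $R$) will respect the defining products of $S$ provided that $x,y$ can be chosen with
$$(x^2+y^2)p^r\equiv sp^r\pmod{p^k}, \quad\text{that is,}\quad x^2+y^2\equiv s\pmod{p^{k-r}}.$$

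The crucial — and only non-routine — step is the existence of such $x$ and $y$. I would argue as follows. Since $p$ is odd, a pigeonhole count on the sets $\{a^2:a\in\mathbb{F}_p\}$ and $\{s-b^2:b\in\mathbb{F}_p\}$, each of cardinality $(p+1)/2$, forces them to meet; hence every element of $\mathbb{F}_p$ is a sum of two squares. Pick integers $x_0,y_0$ with $x_0^2+y_0^2\equiv s\pmod{p}$; as $\gcd(s,p)=1$, at least one of them — say $x_0$ — is coprime to $p$. Then Hensel's Lemma applied to $g(X)=X^2+y_0^2-s$ at $X=x_0$, with $g'(x_0)=2x_0$ a unit because $p$ is odd, lifts $x_0$ to some $x\in\Z{p^{k-r}}$ with $x^2+y_0^2\equiv s\pmod{p^{k-r}}$. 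Taking $y:=y_0$ gives the required pair.

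It remains only to check that $f$ is bijective. With respect to the chosen standard bases its coordinate matrix is
$$\begin{pmatrix}1&0&0&0\\0&x&-y&0\\0&y&x&0\\0&0&0&x^2+y^2\end{pmatrix},$$
whose determinant $(x^2+y^2)^2$ reduces to $s^2$ modulo $p$ and is therefore a unit of $\Z{p^k}$. Hence the matrix is regular, $f$ is a linear bijection, and by the multiplication identities above it is a ring isomorphism, proving the lemma.
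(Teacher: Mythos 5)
Your proof is correct and follows essentially the same route as the paper: both use the rotation-type change of basis $I\mapsto xi+yj$, $J\mapsto -yi+xj$ with the key input that the unit $s$ (the paper uses $s^{-1}$, mapping in the opposite direction) is a sum of two squares. The only difference is that you prove the two-squares representation directly by pigeonhole plus Hensel lifting, whereas the paper cites Proposition 1 of its reference [nosotros]; both are fine.
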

\begin{proof}   
Let $x,y \in \Z{p^k}^*$ such that $x^2+y^2 \equiv s^{-1} \pmod {p^k}$ (the exist due to \cite[Proposition 1]{nosotros}). Now let us consider $\{1,i,j,k\}$ and $\{1,I,J,K\}$ standard bases of $R$ and $S$, respectively. Then, the linear map whose matrix with respect to these bases is 
$$A=\begin{pmatrix} 1&0&0&0\\0&x&-y&0\\0&y&x&0\\0&0&0&s^{-1}\end{pmatrix}$$
induces an isomorphism because 
$$(x I+ y J)^2=(x^2+y^2)s p^r \equiv p^r \pmod {p^k},$$
$$(-y I+ x J)^2=(x^2+y^2)s p^r \equiv p^r \pmod {p^k},$$
$$(x I+ y J)(-y I+ x J) \equiv (x^2+y^2)K \equiv s^{-1}K \pmod {p^k}$$
and $A$ is regular over $\Z{p^k}$.
\end{proof}

\begin{Lemma} 
\label{casoescalar}  
Let $u$ be a quadratic nonresidue modulo $p$ with $p\nmid u$ and consider integers $a$ and $b$ coprime to $p$ and let $0\leq s$. Then,
\begin{itemize}
\item[i)] 
$$\Big(\frac{1, a p^s}{\Z{p^k}}\Big) \cong\Big(\frac{1,  p^s}{\Z{p^k}}\Big)\ \textrm{and}\ \Big(\frac{u,  p^s}{\Z{p^k}}\Big) \cong \Big(\frac{u, b p^s}{\Z{p^k}}\Big)$$
\item[ii)] The isomorphism
$$\Big(\frac{1,  p^s}{\Z{p^k}}\Big)\cong \Big(\frac{u,  p^s}{\Z{p^k}}\Big)$$
holds if and only if $s=0$.
\end{itemize}
\end{Lemma}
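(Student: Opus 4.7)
For part (i), my plan is to realize both isomorphisms by fixing the first generator and replacing the second by an appropriate pure quaternion of the form $cJ+dK$ in the target. In the target ring $\Big(\frac{1,p^s}{\Z{p^k}}\Big)$, the relations $JK+KJ=0$ and $K^2=-p^s$ give $(cJ+dK)^2=(c^2-d^2)p^s$; taking $c=(a+1)/2$ and $d=(a-1)/2$, which is valid since $p$ is odd, gives $c^2-d^2=a$ and hence a pure quaternion whose square is $ap^s$. Extending $1\mapsto 1$, $i\mapsto I$, $j\mapsto cJ+dK$, $k\mapsto dJ+cK$ to an $R$-linear map then respects all defining relations (the anticommutation of $I$ with $cJ+dK$ is automatic from $JI=-IJ$), and its coordinate matrix has determinant $c^2-d^2=a$, a unit in $\Z{p^k}$. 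For the second isomorphism, an analogous computation in $\Big(\frac{u,bp^s}{\Z{p^k}}\Big)$ yields $(cJ+dK)^2=b(c^2-ud^2)p^s$, so I need $c^2-ud^2\equiv b^{-1}\pmod{p^{k-s}}$. The form $c^2-ud^2$ is the norm from $\mathbb{F}_p[\sqrt{u}]$ to $\mathbb{F}_p$, a surjective group homomorphism onto $\mathbb{F}_p^\times$; a solution modulo $p$ therefore exists, it can be chosen with $c\not\equiv 0\pmod p$ (since the norm form has $p+1$ preimages for each nonzero value and at most two of them have $c=0$), and Hensel's lemma lifts it to $\Z{p^{k-s}}$ and then trivially to $\Z{p^k}$. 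The resulting coordinate matrix has determinant $c^2-ud^2=b^{-1}$, again a unit, so the map is an isomorphism.

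For part (ii), the ``if'' direction is immediate from the theorem quoted in the introduction from \cite{nosotros}: when $s=0$ the entries $1$ and $u$ are units coprime to $p$, so both rings are isomorphic to $\mathbb{M}_2(\Z{p^k})$. For the ``only if'' direction, suppose $s\geq 1$ and that the stated isomorphism held. Applying Lemma \ref{modulo} with reduction modulo $p$ would yield $\Big(\frac{1,0}{\Z{p}}\Big)\cong\Big(\frac{u,0}{\Z{p}}\Big)$. Any such isomorphism preserves pure quaternions by the remark following Theorem \ref{teoriso}, and hence preserves the set of their squares. A direct expansion gives $(Ai+Bj+Ck)^2=A^2a+B^2b-C^2ab$ (the cross terms vanish from the anticommutation of $i,j,k$), which specializes to $A^2$ in $\Big(\frac{1,0}{\Z{p}}\Big)$ and to $uA^2$ in $\Big(\frac{u,0}{\Z{p}}\Big)$. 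The corresponding sets of squares are $\{0\}\cup(\mathbb{F}_p^\times)^2$ and $\{0\}\cup u\cdot(\mathbb{F}_p^\times)^2$, which are disjoint away from $0$ because $u$ is a non-residue, yielding the required contradiction.

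The main obstacle I expect is in part (i): it is not at all evident what the right change of basis should be, since the naive rescaling $j\mapsto xJ$ only works when $a$ (respectively $b$) is a square modulo $p$, which is not assumed. The key point is that mixing $J$ and $K$ produces a richer supply of pure quaternions whose squares are controlled by the norm form $c^2-ud^2$ (or $c^2-d^2$ in the split case $u=1$), and this norm form surjects onto $\mathbb{F}_p^\times$; once that is noted, the remainder of part (i) is bookkeeping together with a single Hensel lift, and part (ii) reduces to the elementary mod-$p$ computation above.
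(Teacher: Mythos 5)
Your proposal is correct and follows essentially the same route as the paper: part (i) is realized by the same change of basis $j\mapsto cJ+dK$ governed by the binary form $c^2-d^2$ (resp.\ $c^2-ud^2$), and part (ii) rests on the same invariant, namely the squares of pure quaternions, together with the quoted theorem for $s=0$. Your version is somewhat more explicit than the paper's (an explicit solution $(c,d)=((a+1)/2,(a-1)/2)$, the Hensel lift for the norm form, and a preliminary reduction modulo $p$ in part (ii)), but these are refinements of the same argument rather than a different one.
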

\begin{proof} 
\begin{itemize}
\item[i)]
To see that $\displaystyle R=\Big(\frac{1,  p^s}{\Z{p^k}}\Big) \cong \Big(\frac{1, a p^s}{\Z{p^k}}\Big)=S$, let us consider $\{1,i,j,k\}$ and $\{1,I,J,K\}$ standard bases of $R$ and $S$, respectively. It is obvious that there exist $x,y\in\Z{p^k}$ with $\gcd(p,y)=1$ such that $x^y-y^2\equiv a^{-1}\pmod{p^k}$. Then, the linear map whose matrix with respect to these bases is 
$$A=\begin{pmatrix} 1&0&0&0\\0&1&0&0\\0&0&x&y\\0&0&y&x\end{pmatrix}$$
induces an isomorphism because  
$$(xj+yk)^2=x^2 j^2+y^2k^2=x^2 ap^s-y^2ap^s=a p^s(x^2-y^2) \equiv  p^s \pmod {p^k}$$
and $A$ is regular over $\Z{p^k}$.

The remaining isomorphism can be proved in a similar way.
\item[ii)]
Assume that $s>0$. To see that $\displaystyle \Big(\frac{1,  p^s}{\Z{p^k}}\Big) \not \cong \Big(\frac{u,  p^s}{\Z{p^k}}\Big)$ it is enough to observe that $\displaystyle \Big(\frac{1,  p^s}{\Z{p^k}}\Big)$ does not contain any pure quaternion $q$ with $q^2=u$. In fact, if $\{1,i,j,k\}$ is a standard basis, $q=ai+bj+ck$ and $q^2=a^2+(b^2-c^2)p^s$. Hence, if $q^2\equiv u\pmod{p^k}$ if follows that $u$ is a quadratic residue modulo $p$, which is a contradiction.

On the other hand, if $s=0$, we know that $\Big(\frac{1,  1}{\Z{p^k}}\Big)\cong \Big(\frac{u,  1}{\Z{p^k}}\Big)$ using \cite[Theorem 4]{nosotros}.
\end{itemize}
\end{proof}

\begin{Lemma} 
\label{cerosres} 
Let $u$ be a quadratic nonresidue modulo $p$ with $p\nmid u$ and let $0<s<k$. Then,
\begin{itemize}
\item[i)]
$\displaystyle R_1=\Big(\frac{u p^s,  p^s}{\mathbb{Z}_{p^k}}\Big)\not\cong \Big(\frac{ p^s,  p^s}{\mathbb{Z}_{p^k}}\Big)=R_2.$
\item[ii)]
$\displaystyle S_1=\Big(\frac{u p^s,0}{\mathbb{Z}_{p^k}}\Big)\ncong \Big(\frac{ p^s,0}{\mathbb{Z}_{p^k}}\Big)=S_2.$
\end{itemize}
\end{Lemma}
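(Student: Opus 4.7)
The plan is to distinguish the two rings in each pair by means of a cardinality invariant on squares of pure quaternions. Since $p$ is odd, Theorem \ref{teoriso} guarantees that any ring isomorphism sends pure quaternions to pure quaternions and fixes $\Z{p^k}$; consequently, for every $r\in\Z{p^k}$, the number of pure quaternions with square equal to $r$ is preserved, and so is the cardinality of any set defined by a condition of the form $q^2\in I$ for some ideal $I\subseteq\Z{p^k}$.

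For part (i), I would compute $q^2$ for a pure quaternion with coordinates $(A,B,C)$ in a standard basis: this yields $q^2\equiv(A^2+B^2)p^s-C^2p^{2s}\pmod{p^k}$ in $R_2$ and $q^2\equiv(uA^2+B^2)p^s-uC^2p^{2s}\pmod{p^k}$ in $R_1$. Then I would count the pure quaternions $q$ satisfying $q^2\equiv 0\pmod{p^{s+1}}$. Using $s\ge 1$ (so $2s\ge s+1$), the $C^2$ term vanishes and the condition reduces to $A^2+B^2\equiv 0\pmod p$ in $R_2$ and $uA^2+B^2\equiv 0\pmod p$ in $R_1$, with $C$ free. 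A direct count in $\mathbb{F}_p^2$ shows that such a binary form has $2p-1$ zeros if it is isotropic over $\mathbb{F}_p$ and only the trivial zero otherwise; $A^2+B^2$ is isotropic iff $-1$ is a quadratic residue modulo $p$, while $uA^2+B^2$ is isotropic iff $-u$ is. Since $u$ is a nonresidue, exactly one of $-1$ and $-u$ is a residue modulo $p$, so the counts in $\mathbb{F}_p^2$ differ. Lifting arbitrarily to $\Z{p^k}$ and letting $C$ range freely multiplies both counts by the same factor $p^{3k-2}$, so the resulting sets of pure quaternions have different cardinalities, ruling out any isomorphism.

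Part (ii) is simpler. In $S_\ell$ we have $J^2=0$, hence $K^2=-I^2J^2=0$, and so for any pure quaternion $q=AI+BJ+CK$ one has $q^2=A^2I^2$. The sets of squares of pure quaternions are therefore $\{A^2up^s:A\in\Z{p^k}\}$ in $S_1$ and $\{A^2p^s:A\in\Z{p^k}\}$ in $S_2$. If an isomorphism $f\colon S_1\to S_2$ existed, then $f(I)$ would be a pure quaternion in $S_2$ with $f(I)^2=up^s$, forcing $A^2\equiv u\pmod{p^{k-s}}$ for some $A\in\Z{p^k}$. This is impossible because $k-s\ge 1$ and $u$ is a nonresidue modulo $p$.

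The main obstacle is really just the combinatorial bookkeeping in part (i): verifying the $\mathbb{F}_p$-solution counts of the two binary quadratic forms and carefully using both halves of the hypothesis $0<s<k$ (the inequality $s\ge 1$ is needed in (i) to drop the $C^2p^{2s}$ term modulo $p^{s+1}$, and $s<k$ is needed in (ii) so that the obstruction modulo $p^{k-s}$ is nontrivial).
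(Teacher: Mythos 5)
Your proposal is correct and follows essentially the same strategy as the paper: part (ii) is the paper's argument verbatim (no pure quaternion of $S_2$ squares to $up^s$ since $u$ is a nonresidue), and part (i) uses the same discriminating invariant, namely a count of pure quaternions with vanishing square governed by the isotropy over $\mathbb{F}_p$ of $A^2+B^2$ versus $uA^2+B^2$, i.e.\ by which of $-1$, $-u$ is a quadratic residue. Your choice of the set $\{q \text{ pure}: q^2\equiv 0 \pmod{p^{s+1}}\}$ instead of the paper's $\{q \text{ pure}: \mathrm{n}(q)=0,\ pq\neq 0\}$ is a minor variation that makes the lifting count slightly cleaner, but the key idea is identical.
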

\begin{proof}
\begin{itemize}
\item[i)]
Let us consider the following sets:
$$N_1:=\{q\in R_1 : \textrm{$q$ is a pure quaternion, $\textrm{n}(q)=0$, $pq\neq 0$}\},$$
$$N_2:=\{q\in R_2 : \textrm{$q$ is a pure quaternion, $\textrm{n}(q)=0$, $pq\neq 0$}\}.$$
In order to prove that $R_1\ncong R_2$ we will see that $\textrm{card}(N_1)\neq\textrm{card}(N_2)$.

To do so, let $\{1,i,j,k\}$ and $\{1,I,J,K\}$ be standard bases of $R_1$ and $R_2$, respectively. Then, if $q_1\in N_1$, it must be $q_1=x_1i+x_2j+x_3k$ with $x_1^2up^s+x_2^2p^s-x_3^sup^{2s}\equiv 0\pmod{p^k}$ and $p\nmid x_l$ for some $l\in\{1,2,3\}$. On the other hand, if $q_2\in N_2$, it must be $q_2=y_1I+y_2J+y_3K$ with $y_1^2p^s+y_2^2p^s-y_3^sp^{2s}\equiv 0\pmod{p^k}$ and $p\nmid y_l$ for some $l\in\{1,2,3\}$.

Thus, $\textrm{card}(R_1)$ is the number of non-zero solutions of the congruence 
\begin{equation}\label{e1}
x_1^2u+x_2^2-x_3^sup^{s}\equiv 0\pmod{p^{k-s}},\end{equation} 
while $\textrm{card}(R_2)$ is the number of non-zero solutions of the congruence 
\begin{equation}\label{e2}
y_1^2+y_2^2-y_3^sp^{s}\equiv 0\pmod{p^{k-s}}.\end{equation}

Now, reducing modulo $p$, we can see that:
\begin{itemize}
\item If $-1$ is a quadratic residue $\pmod{p}$ (i.e., if $p\equiv 1\pmod{4}$), then the congruence (\ref{e2}) has non-zero solutions while the congruence (\ref{e1}) has not.
\item If $-1$ is a quadratic nonresidue $\pmod{p}$ (i.e., if $p\equiv 3\pmod{4}$), then the congruence (\ref{e1}) has non-zero solutions while the congruence (\ref{e2}) has not.
\end{itemize}
In any case, it follows that $\textrm{card}(N_1)\neq\textrm{card}(N_2)$ as claimed.
\item[ii)]
For this case, it is enough to observe that $S_2$ does not contain pure quaternions $q$ such that $q^2=up^s$, while $S_1$ obviously does contain such type of elements. To do so, just note that the congruence $x^2p^s\equiv up^s\pmod{p^k}$ ha no solutions because $u$ is a quadratic nonresidue modulo $p$. 
\end{itemize}
\end{proof}

\begin{Lemma} \label{lema10}
Let $u$ be a quadratic nonresidue $\pmod p$ with $p\nmid u$ and let $0<s<r<k$. Then, the quaternion rings 
$\displaystyle R_1=\Big(\frac{ up^s,u p^r}{\Z{p^k}}\Big)$, $\displaystyle R_2=\Big(\frac{ p^s,u p^r}{\Z{p^k}}\Big)$, $\displaystyle R_3=\Big(\frac{ up^s,  p^r}{\Z{p^k}}\Big)$ and $\displaystyle R_4=\Big(\frac{p^s,  p^r}{\Z{p^k}}\Big)$ are pairwise non-isomorphic.
\end{Lemma}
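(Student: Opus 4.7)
By Corollary~\ref{coriso}, any ring isomorphism $R_i\to R_j$ preserves the norm and therefore preserves the set $\{q^2:q\text{ pure}\}\subset\Z{p^k}$ and the cardinality $\bigl|\{q\text{ pure}:q^2=c\}\bigr|$ for each $c\in\Z{p^k}$; these will be my invariants. Four of the six pairs---namely $(R_1,R_2)$, $(R_1,R_4)$, $(R_2,R_3)$ and $(R_3,R_4)$---differ in the unit at the $p^s$-level of the first entry, and I would handle all four at once: applying Lemma~\ref{modulo} with $s'=s+1$ kills the second entry in each ring (it is a multiple of $p^r$ and $r>s$), so the assumed isomorphism descends to $\Big(\frac{up^s,0}{\Z{p^{s+1}}}\Big)\cong\Big(\frac{p^s,0}{\Z{p^{s+1}}}\Big)$, which contradicts Lemma~\ref{cerosres}(ii).

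The pairs $(R_1,R_3)$ and $(R_2,R_4)$ differ only at the $p^r$-level and coincide modulo $p^{s+1}$, so the previous descent is useless. My plan is to reduce modulo $p^{r+1}$ via Lemma~\ref{modulo} (this kills the $p^{s+r}$ term of the norm form on pure quaternions, since $s\geq 1$) and split on the parity of $r-s$. When $r-s$ is odd, the invariant ``some pure $q$ satisfies $q^2=p^r$'' separates the pair: in the member whose second entry is $p^r$ this is witnessed by $q=j$; in the partner ring the equation $q^2=p^r$ reduces modulo $p^{r+1}$ to the form $\alpha x_1^2 p^s + u x_2^2 p^r\equiv p^r$ for some unit $\alpha$, and the two summands on the left have $p$-adic valuations $s+2v(x_1)$ and $r+2v(x_2)$, of parities $s$ and $r$ respectively; since these parities never coincide when $r-s$ is odd, the only way to attain valuation $r$ on the left is $v(x_2)=0$, which then forces $x_2^2 u\equiv 1\pmod p$ and contradicts $u$ being a nonresidue. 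When $r-s$ is even this existence invariant collapses, so I would instead count pure $q$ with $q^2\equiv 0\pmod{p^{r+1}}$: iterated valuation reductions compress the condition to $z^2+x^2\equiv 0\pmod p$ in one ring of the pair and to $uz^2+x^2\equiv 0\pmod p$ in the other, and exactly one of these mod-$p$ equations has a nontrivial solution---the first iff $-1$ is a square modulo $p$, the second iff $-u$ is, and these are complementary since $u$ is a nonresidue---so the two counts always differ.

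The main obstacle is the $r-s$ even subcase: the existence invariant fails, and one is forced to carry out a counting argument that bifurcates according to $p\pmod 4$. The bifurcation is clean, because the ``extra'' null pure quaternions land in $R_1$ or $R_3$ (respectively in $R_2$ or $R_4$) depending on which of $-1$ and $-u$ happens to be a square modulo $p$, but the verification needs to keep track of both subcases in order to conclude that the counts never agree.
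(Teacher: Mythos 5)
Your proposal is correct, and for the two ``hard'' pairs it takes a genuinely different route from the paper. For the four pairs whose members differ in the unit multiplying $p^s$, your descent via Lemma~\ref{modulo} to $\Z{p^{s+1}}$ followed by Lemma~\ref{cerosres}(ii) is essentially the paper's argument (the paper descends to $\Z{p^r}$ instead; both moduli work for the same reason). The divergence is in $(R_1,R_3)$ and $(R_2,R_4)$: the paper invokes Proposition~\ref{formamat} to put a hypothetical isomorphism into the normal form coming from the Cayley--Dickson extension of $\{1,f(i)\}$, and then reads off $\beta_1^2\equiv u\pmod p$ from $f(j)^2=up^r$, a contradiction. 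You instead descend to $\Z{p^{r+1}}$ and use purely invariant data: for $r-s$ odd, the existence of a pure square root of $p^r$ (your parity-of-valuations observation correctly rules out interference between the $p^s$- and $p^r$-terms, forcing $v(x_2)=0$ and $ux_2^2\equiv 1\pmod p$); for $r-s$ even, the cardinality of $\{q\ \text{pure}:q^2=0\}$. I checked the even case: after dividing by $up^s$ or $p^s$ the two conditions become $x_1^2+x_2^2p^{r-s}\equiv 0$ and $ux_1^2+x_2^2p^{r-s}\equiv 0\pmod{p^{r-s+1}}$, the strata with $p\mid x_2$ contribute identically to both counts, and on the stratum with $x_2$ a unit one writes $x_1=p^{(r-s)/2}z$ to land on your two mod-$p$ conics, exactly one of which is nontrivially solvable since $-1$ and $-u$ have opposite quadratic characters; so the counts genuinely differ. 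What your approach buys is independence from Proposition~\ref{formamat}, whose Cayley--Dickson step is the least detailed part of the paper when $a$ is a zero divisor; the price is the parity case split and a counting argument (in which you should make explicit that the free coordinate $x_3$ multiplies both counts by the same factor) that the paper's normal-form computation avoids.
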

\begin{proof} 
Let us see that $R_1\ncong R_2$, $R_1\ncong R_4$, $R_2\ncong R_3$ and $R_3\ncong R_4$. If they were isomorphic, the due to Lemma \ref{modulo} we would have (reducing modulo $p^r$) that $\displaystyle \Big(\frac{ up^s,  0}{\Z{p^r}}\Big)\cong \Big(\frac{ p^s,  0}{\Z{p^r}}\Big)$, which contradicts Lemma \ref{cerosres}.

Now, let us see that $R_1\ncong R_3$. Assume that $R_1\cong R_3$. Then, due to Proposition \ref{formamat}, we can consider $\{1,i,j,k\}$ and $\{1,I,J,K\}$ standard bases of $R_1$ and $R_3$, respectively such that the matrix of the isomorphism with respect to these bases is
$$\begin{pmatrix} 1&0&0&0\\0&1&\alpha_1&\alpha_2\\0&0&\beta_1&\beta_2\\0&0&\gamma_1&\gamma_2\end{pmatrix},$$
with $\alpha_1up^s=0$. 

In particular, $up^r=j^2=f(j^2)=f(j)^2=(\alpha_1I+\beta_1J+\gamma_1K)^2=\alpha_1^2up^s+\beta_1^2p^r-\gamma_1^2up^{r+s}=\beta_1^2p^r-\gamma_1^2u^2p^{r+s}$. In other words, $\beta_1^2p^r-\gamma_1^2up^{r+s}\equiv up^r\pmod{p^k}$ but this implies that $\beta_1^2-\gamma_1^2up^s\equiv u\pmod{p^{k-r}}$ and, consequently, that $\beta_1^2\equiv u\pmod{p}$ which is a contradiction because $u$ is a quadratic nonresidue.

The remaining case, namely $R_2\ncong R_4$ can be proved in the exact same way.
\end{proof}

\begin{cor} 
Let $u$ be a quadratic nonresidue modulo $p$ with $p\nmid u$. Consider integers $a$ and $b$ coprime to $p$ and let $0<r$. Then,
$$\Big(\frac{a,bp^r}{\Z{p^{k}}}\Big) \cong \begin{cases}\Big(\frac{u , p^r}{\Z{p^{k}}}\Big), & \textrm{if $a$ is a quadratic nonresidue modulo $p$}; \\ 
\Big(\frac{1  , p^r}{\Z{p^{k}}}\Big), & \textrm{if $a$ is a quadratic residue modulo $p$}.\end{cases}$$
\end{cor}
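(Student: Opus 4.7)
The plan is to reduce the two arguments of the quaternion ring one at a time, using two of the lemmas already established in this section. The first step handles the unit argument $a$, the second step handles the non-unit argument $bp^r$.

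\textbf{Step 1 (normalize the first argument).} I would apply Lemma \ref{mismoresiduo} with exponent $r=0$, which gives that $(t,m)\cong(s,m)$ over $\Z{p^k}$ whenever $s,t$ are coprime to $p$ and $st$ is a quadratic residue modulo $p$. Treating $bp^r$ as the parameter $m$: if $a$ is a quadratic residue modulo $p$, take $s=1$, so that $s\cdot a=a$ is a QR, and obtain
$$\Big(\frac{a,bp^r}{\Z{p^k}}\Big)\cong\Big(\frac{1,bp^r}{\Z{p^k}}\Big).$$
If instead $a$ is a quadratic nonresidue, take $s=u$, so that $ua$ is the product of two nonresidues, hence a QR (since $p$ is odd), and obtain
$$\Big(\frac{a,bp^r}{\Z{p^k}}\Big)\cong\Big(\frac{u,bp^r}{\Z{p^k}}\Big).$$

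\textbf{Step 2 (normalize the second argument).} Now I apply Lemma \ref{casoescalar}(i) to absorb the unit $b$ in the second slot. In the QR case, the first isomorphism of that lemma yields $(1,bp^r)\cong(1,p^r)$. In the QNR case, the second isomorphism of the lemma yields $(u,bp^r)\cong(u,p^r)$. Composing with Step 1 gives exactly the two claimed isomorphisms.

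\textbf{Obstacle.} There is essentially no hard step: everything is bookkeeping on top of Lemmas \ref{mismoresiduo} and \ref{casoescalar}(i). The only thing worth checking explicitly is that in the QNR case the product $ua$ is indeed a QR modulo $p$, which uses that $p$ is odd so that the Legendre symbol is multiplicative; this is the only place the hypothesis on $p$ enters at this stage.
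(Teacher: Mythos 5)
Your proposal is correct and follows essentially the same route as the paper: both reduce the unit slot via Lemma \ref{mismoresiduo} (with exponent $0$) and absorb the unit $b$ in the non-unit slot via Lemma \ref{casoescalar}(i), differing only in the order of the two reductions in the nonresidue case. The paper additionally remarks (via Lemma \ref{casoescalar}(ii)) that the two target rings $\bigl(\frac{u,p^r}{\Z{p^k}}\bigr)$ and $\bigl(\frac{1,p^r}{\Z{p^k}}\bigr)$ are non-isomorphic, which is worth recording for the later count but is not part of the displayed claim.
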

\begin{proof} 
If $a$ is a quadratic nonresidue:
$$\Big(\frac{a,bp^r}{\Z{p^{k}}}\Big) \substack{\cong\\\textrm{Lem. \ref{casoescalar}}} \Big(\frac{a,p^r}{\Z{p^{k}}}\Big) \substack{\cong\\\textrm{Lem. \ref{mismoresiduo}}} \Big(\frac{u,p^r}{\Z{p^{k}}}\Big).$$
Now, if $a$ is a quadratic residue:
$$\Big(\frac{a,bp^r}{\Z{p^{k}}}\Big) \substack{\cong\\\textrm{Lem. \ref{mismoresiduo}}} \Big(\frac{1,bp^r}{\Z{p^{k}}}\Big) \substack{\cong\\\textrm{Lem. \ref{casoescalar}}} \Big(\frac{1,p^r}{\Z{p^{k}}}\Big).$$
Finally, $\Big(\frac{u,p^r}{\Z{p^{k}}}\Big)$ and $\Big(\frac{1,p^r}{\Z{p^{k}}}\Big)$ are not isomorphic due to Lemma \ref{casoescalar}.
\end{proof}

\begin{cor} 
Let $u$ be a quadratic nonresidue modulo $p$ with $p\nmid u$. Consider integers $a$ and $b$ coprime to $p$ and let $0<r$. Then,
$$\Big(\frac{ap^r ,bp^r}{\Z{p^{k}}}\Big) \cong \begin{cases}\Big(\frac{up^r , p^r}{\Z{p^{k}}}\Big), & \textrm{if $ab$ is a quadratic nonresidue modulo $p$}; \\ \Big(\frac{p^r,p^r}{\Z{p^{k}}}\Big), & \textrm{if $ab$ is a quadratic residue modulo $p$}.\end{cases}$$
\end{cor}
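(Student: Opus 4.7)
The plan is to treat the two cases in the statement separately, applying Lemma \ref{mismoresiduo} and Lemma \ref{mismoresiduo2} to each coordinate and using Lemma \ref{orden} to swap entries whenever convenient.

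First I would handle the case when $ab$ is a quadratic nonresidue modulo $p$. This forces exactly one of $a,b$ to be a quadratic residue; by Lemma \ref{orden} it is enough to treat, say, the subcase where $a$ is a quadratic residue and $b$ is a nonresidue. Since $a\cdot 1$ is a QR, Lemma \ref{mismoresiduo} gives $\bigl(\tfrac{ap^r,bp^r}{\Z{p^k}}\bigr)\cong\bigl(\tfrac{p^r,bp^r}{\Z{p^k}}\bigr)$. Swapping the two slots via Lemma \ref{orden} and using that $bu$ is a QR (product of two nonresidues), Lemma \ref{mismoresiduo} again yields $\bigl(\tfrac{bp^r,p^r}{\Z{p^k}}\bigr)\cong\bigl(\tfrac{up^r,p^r}{\Z{p^k}}\bigr)$, which is the desired canonical form.

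Next I would handle the case when $ab$ is a quadratic residue modulo $p$, so $a$ and $b$ are both QR or both QNR. If both are residues, two successive applications of Lemma \ref{mismoresiduo} (together with Lemma \ref{orden} to switch coordinates) reduce $\bigl(\tfrac{ap^r,bp^r}{\Z{p^k}}\bigr)$ first to $\bigl(\tfrac{p^r,bp^r}{\Z{p^k}}\bigr)$ and then to $\bigl(\tfrac{p^r,p^r}{\Z{p^k}}\bigr)$. If both are nonresidues, the key observation is that $ab$ is a QR, so by Lemma \ref{mismoresiduo} (applied to the second coordinate after swapping with Lemma \ref{orden}) we can replace $b$ by $a$, obtaining $\bigl(\tfrac{ap^r,bp^r}{\Z{p^k}}\bigr)\cong\bigl(\tfrac{ap^r,ap^r}{\Z{p^k}}\bigr)$. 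At this point Lemma \ref{mismoresiduo2} applies directly and gives $\bigl(\tfrac{ap^r,ap^r}{\Z{p^k}}\bigr)\cong\bigl(\tfrac{p^r,p^r}{\Z{p^k}}\bigr)$.

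Finally, to make the dichotomy genuine, I would remark that the two target rings $\bigl(\tfrac{up^r,p^r}{\Z{p^k}}\bigr)$ and $\bigl(\tfrac{p^r,p^r}{\Z{p^k}}\bigr)$ are themselves non-isomorphic by Lemma \ref{cerosres}(i), so the assignment to a canonical form is well defined and the statement of the corollary holds. There is no real obstacle here; the only care needed is in the ``both nonresidues'' subcase of the second case, where one must invoke Lemma \ref{mismoresiduo2} (rather than Lemma \ref{mismoresiduo} alone) to pass from $\bigl(\tfrac{ap^r,ap^r}{\Z{p^k}}\bigr)$ to $\bigl(\tfrac{p^r,p^r}{\Z{p^k}}\bigr)$, since Lemma \ref{mismoresiduo} cannot rescale a nonresidue to $1$ on its own.
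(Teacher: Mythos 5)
Your argument is correct and takes essentially the same route as the paper: reduce to a canonical form by applying Lemma \ref{mismoresiduo} (with Lemma \ref{orden} to switch coordinates) and Lemma \ref{mismoresiduo2}, then separate the two targets via Lemma \ref{cerosres}. The only cosmetic difference is that the paper treats the residue case in a single uniform chain $\bigl(\tfrac{ap^r,bp^r}{\Z{p^k}}\bigr)\cong\bigl(\tfrac{bp^r,bp^r}{\Z{p^k}}\bigr)\cong\bigl(\tfrac{p^r,p^r}{\Z{p^k}}\bigr)$ rather than splitting into your two subcases.
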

\begin{proof} 
If $ab$ is a quadratic nonresidue, only one among $a$ and $b$ is a quadratic residue. We can assume without loss of generality that $a$ is a quadratic residue and that $b$ is a quadratic nonresidue (so $ub$ is a quadratic residue) and then:
$$\Big(\frac{ap^r,bp^r}{\Z{p^{k}}}\Big) \substack{\cong\\\textrm{Lem. \ref{mismoresiduo}}} \Big(\frac{ap^r,up^r}{\Z{p^{k}}}\Big) \substack{\cong\\\textrm{Lem. \ref{mismoresiduo}}} \Big(\frac{p^r,up^r}{\Z{p^{k}}}\Big).$$
Now, if $ab$ is a quadratic residue:
$$\Big(\frac{ap^r,bp^r}{\Z{p^{k}}}\Big) \substack{\cong\\\textrm{Lem. \ref{mismoresiduo}}} \Big(\frac{bp^r,bp^r}{\Z{p^{k}}}\Big) \substack{\cong\\\textrm{Lem. \ref{mismoresiduo2}}} \Big(\frac{p^r,p^r}{\Z{p^{k}}}\Big).$$
Finally, $\Big(\frac{p^r,p^r}{\Z{p^{k}}}\Big)$ and $\Big(\frac{up^r,p^r}{\Z{p^{k}}}\Big)$ are not isomorphic due to Lemma \ref{cerosres}.
\end{proof}

\begin{cor} 
Let $u$ be a quadratic nonresidue modulo $p$ with $p\nmid u$. Consider integers $a$ and $b$ coprime to $p$ and let $0<s<r$. Then,
$$\Big(\frac{ap^s,bp^r}{\Z{p^{k}}}\Big) \cong \begin{cases}\Big(\frac{up^s,p^r}{\Z{p^{k}}}\Big), & \textrm{if only $b$ is a quadratic residue modulo $p$}; \\ \Big(\frac{p^s,u p^r}{\Z{p^{k}}}\Big),& \textrm{if only $a$ is a quadratic residue $\pmod p$}.  \\ \Big(\frac{p^s,p^r}{\Z{p^{k}}}\Big), & \textrm{if both $a$ and $b$ are quadratic residues modulo $p$}. \\ \Big(\frac{up^s ,u p^r}{\Z{p^{k}}}\Big), & \textrm{if both $a$ and $b$ are quadratic nonresidues modulo $p$}. \end{cases} $$
\end{cor}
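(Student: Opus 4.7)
The plan is to combine two tools already developed in the paper: Lemma \ref{mismoresiduo}, which allows us to replace the coefficient of $p^s$ (respectively $p^r$) by any element in the same quadratic character class modulo $p$, and Lemma \ref{lema10}, which guarantees that the four canonical representatives on the right-hand side are pairwise non-isomorphic.

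First I would carry out the reduction to canonical form. Given $a$, $b$ coprime to $p$, I apply Lemma \ref{mismoresiduo} to the first slot to replace $a$ by $1$ if $a$ is a quadratic residue modulo $p$, and by $u$ if $a$ is a quadratic nonresidue (this is legitimate because $a\cdot 1$ or $a\cdot u$ is then a quadratic residue). Then I use Lemma \ref{orden} to swap the two slots, apply Lemma \ref{mismoresiduo} again to the entry in the first slot (which now carries the factor $p^r$) to replace $b$ by $1$ or $u$ according to its quadratic character, and swap back with Lemma \ref{orden}. The four cases of the statement correspond exactly to the four possible outcomes $(1,1)$, $(1,u)$, $(u,1)$, $(u,u)$ for the pair of reduced coefficients, producing the right-hand sides $\Big(\frac{p^s,p^r}{\Z{p^k}}\Big)$, $\Big(\frac{p^s,up^r}{\Z{p^k}}\Big)$, $\Big(\frac{up^s,p^r}{\Z{p^k}}\Big)$, and $\Big(\frac{up^s,up^r}{\Z{p^k}}\Big)$ respectively.

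Second, for the non-isomorphism between distinct canonical forms (needed to see that the case distinction is sharp and that the isomorphism class is determined by the character pair), I would simply cite Lemma \ref{lema10}, which states precisely that these four rings are pairwise non-isomorphic under the hypothesis $0<s<r<k$. This is the content that required real work; in the corollary it is used only as a black box.

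I do not expect any significant obstacle: the argument is a routine assembly of Lemma \ref{mismoresiduo}, Lemma \ref{orden}, and Lemma \ref{lema10}, paralleling the structure of the two preceding corollaries. The only point worth double-checking is that applying Lemma \ref{mismoresiduo} to the second slot (via the swap) is valid, i.e.\ that the coefficient being adjusted there is still coprime to $p$, which is guaranteed by the hypothesis $\gcd(b,p)=1$.
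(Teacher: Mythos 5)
Your proposal is correct and follows essentially the same route as the paper: the paper's proof is exactly ``apply Lemma \ref{mismoresiduo} repeatedly'' (with the implicit use of Lemma \ref{orden} to reach the second slot) to reduce each coefficient to $1$ or $u$ according to its quadratic character, and then cite Lemma \ref{lema10} for the pairwise non-isomorphism of the four canonical forms. Your explicit check that the coefficient being adjusted remains coprime to $p$ is a reasonable bit of extra care, but nothing in your argument departs from the paper's.
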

\begin{proof} 
Like in the previous results, it is enough to apply Lemma \ref{mismoresiduo} repeatedly. The four different cases that arise are non-isomorphic due to Lemma \ref{lema10}.
\end{proof}

Now, we can prove the main result of this section.

\begin{teor}\label{TEORPP}
Let $p$ be an odd prime and let $k$ be a positive integer. Then, there exist exactly $2k^2+2$ non-isomorphic generalized quaternion rings over $\Z{p^k}$.
\end{teor}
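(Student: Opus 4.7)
My approach is to stratify the pairs $(a,b)$ by the ordered pair of $p$-adic valuations, reduce each stratum to a short list of candidates, and verify that those candidates are pairwise non-isomorphic. Concretely, by Lemma~\ref{orden} I can assume $v_p(a)\le v_p(b)$; by Lemma~\ref{mismosordenes}, the resulting pair $(s,r):=(v_p(a),v_p(b))$, with the convention $v_p(0)=k$, is itself an isomorphism invariant. The total count is therefore a sum over the strata $\{(s,r):0\le s\le r\le k\}$.

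Within each stratum, the earlier results of the section already pin down the number of isomorphism classes. The extreme strata $(0,0)$ and $(k,k)$ contribute one class each (the first by Theorem~\cite[Theorem 4]{nosotros}, the second trivially). The three corollaries after Lemma~\ref{lema10} handle the generic strata: two classes in each stratum $(0,r)$ and $(s,s)$ with $1\le r,s\le k-1$, and four classes in each stratum $(s,r)$ with $1\le s<r\le k-1$. The remaining boundary strata $(0,k)$ and $(s,k)$ with $1\le s<k$, which correspond to $b=0$, collapse under Lemma~\ref{mismoresiduo} to the two candidates $\{(1,0),(u,0)\}$ and $\{(p^s,0),(up^s,0)\}$, which are then shown non-isomorphic using Lemmas~\ref{casoescalar}(ii) and~\ref{cerosres}(ii) respectively.

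Adding the per-stratum counts gives
\begin{equation*}
1+2(k-1)+2+2(k-1)+4\binom{k-1}{2}+2(k-1)+1 = 4+6(k-1)+2(k-1)(k-2) = 2k^2+2,
\end{equation*}
which is the claimed formula.

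The main obstacle is the bookkeeping at the boundary valuation $r=k$: the corollaries after Lemma~\ref{lema10} are stated under $r<k$, so I need to reconfirm that the pure-quaternion arguments from Lemmas~\ref{casoescalar}(ii) and~\ref{cerosres}(ii) continue to separate the two candidate rings once $b=0$. Since $p$ is odd, Theorem~\ref{teoriso} and Corollary~\ref{coriso} guarantee that isomorphisms preserve pure quaternions and their norms, so the same quadratic-residue obstruction that worked for $0<s<k$ still applies verbatim when $s=k$; this is the only point at which the proof truly depends on a separate check rather than a direct citation of the earlier lemmas.
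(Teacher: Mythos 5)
Your proposal is correct and follows essentially the same route as the paper: stratify by the valuation pair $(s,r)$ (made an invariant via Lemmas~\ref{orden} and~\ref{mismosordenes}), reduce each stratum to the representatives with $a,b\in\{1,u\}$ times the appropriate power of $p$, separate them with Lemmas~\ref{casoescalar}, \ref{cerosres}, \ref{lema10}, and sum $1+2(k-1)+2+2(k-1)+2(k-1)(k-2)+2(k-1)+1=2k^2+2$. Your explicit attention to the $b=0$ boundary strata is a welcome clarification of a point the paper handles by the same quadratic-residue obstruction without comment.
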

\begin{proof} 
Taking into account the previous results, any generalized quaternion ring over $\Z{p^k}$ is isomorphic to one of the following:
$$\Big(\frac{up^s,up^r}{\Z{p^k}}\Big), \Big(\frac{p^s,up^r}{\Z{p^k}}\Big), \Big(\frac{up^s, p^r}{\Z{p^k}}\Big), \Big(\frac{p^s,p^r}{\Z{p^k}}\Big),$$
where $u$ is a quadratic nonresidue $\pmod p$ with $p\nmid u$ and $0\leq s\leq r\leq k$. 
\begin{itemize}
\item If $0=s=r$, due to Lemmata \ref{orden}, \ref{mismoresiduo2} and \ref{casoescalar}, there is only one ring to consider, namely $\Big(\frac{1,1}{\Z{p^k}}\Big)$.
\item If $0=s<r<k$, we must consider the rings
$$\Big(\frac{u,up^r}{\Z{p^k}}\Big), \Big(\frac{1,up^r}{\Z{p^k}}\Big), \Big(\frac{u, p^r}{\Z{p^k}}\Big), \Big(\frac{1,p^r}{\Z{p^k}}\Big).$$
Due to Lemma \ref{casoescalar} we know that $\Big(\frac{u,up^r}{\Z{p^k}}\Big)\cong\Big(\frac{u, p^r}{\Z{p^k}}\Big)$, $\Big(\frac{1,up^r}{\Z{p^k}}\Big)\cong\Big(\frac{1,p^r}{\Z{p^k}}\Big)$ and $\Big(\frac{u, p^r}{\Z{p^k}}\Big)\ncong \Big(\frac{1,p^r}{\Z{p^k}}\Big)$. Hence, in this case we have 2 non-isomorphic generalized quaternion rings for each $1\leq r\leq k-1$. A total of $2(k-1)$.
\item If $0=s$ and $k=r$ we must only consider the rings 
$$\Big(\frac{u,0}{\Z{p^k}}\Big), \Big(\frac{1,0}{\Z{p^k}}\Big)$$
which are non-isomorphic due to Lemma \ref{casoescalar}. Thus, in this case we have 2 non-isomorphic generalized quaternion rings.
\item If $0<s=r<k$, we must consider the rings
$$\Big(\frac{up^r,up^r}{\Z{p^k}}\Big), \Big(\frac{p^r,up^r}{\Z{p^k}}\Big), \Big(\frac{up^r, p^r}{\Z{p^k}}\Big), \Big(\frac{p^r,p^r}{\Z{p^k}}\Big).$$
Using Lemma \ref{orden}, Lemma \ref{mismoresiduo2} and Lemma \ref{cerosres} we know that $\Big(\frac{up^r,up^r}{\Z{p^k}}\Big)\cong \Big(\frac{p^r,p^r}{\Z{p^k}}\Big)\ncong \Big(\frac{up^r, p^r}{\Z{p^k}}\Big)\cong \Big(\frac{p^r,up^r}{\Z{p^k}}\Big)$. Hence, in this case we have 2 non-isomorphic generalized quaternion rings for each $1\leq r\leq k-1$ for a total of $2(k-1)$.
\item If $0<s<r<k$, Lemma \ref{lema10} implies that the four rings are non-isomorphic. Hence, in this case we have 2 non-isomorphic generalized quaternion rings for each $1\leq s\leq k-2$ and each $s+1\leq k\leq k-1$. A total of $2(k-2)(k-1)$.
\item If $0<s<r=k$, we must only consider the rings 
$$\Big(\frac{up^s,0}{\Z{p^k}}\Big), \Big(\frac{p^s,0}{\Z{p^k}}\Big)$$
which are non-isomorphic due to Lemma \ref{cerosres}. Thus, in this case we have 2 non-isomorphic generalized quaternion rings for each $1\leq s\leq k-1$. A total of $2(k-1)$.
\item If $s=r=k$ there is only one ring to consider, namely $\Big(\frac{0,0}{\Z{p^k}}\Big)$.
\end{itemize}
Finally, taking into consideration all the previous information, we conclude that there exist
$$1+2(k-1)+2+2(k-1)+2(k-2)(k-1)+2(k-1)+1=2k^2+2$$
non-isomorphic generalized quaternion rings over $\Z{p^k}$.
\end{proof}

\begin{rem}
The sequence $a_k=2k^2+2$ is sequence A005893 in the OEIS.
\end{rem}

\section{Quaternions over $\Z{n}$ for an odd $n$}

Note that if $n=p_1^{r_1}\ldots p_k^{r_k}$ is the prime factorization of $n$, then by the Chinese Remainder Theorem we have that
\begin{equation}\label{l3}\Z{n}\cong \Z{p_1^{r_1}}\oplus\ldots\oplus\Z{p_k^{r_k}}.\end{equation}
Decomposition (\ref{l3}) induces a natural isomorphism
\begin{equation} \label{F1}\Big(\frac{a,b}{\Z{n}} \Big) \cong \Big(\frac{a,b}{\Z{p_1^{r_1}}} \Big) \oplus\ldots\oplus \Big(\frac{a,b}{\Z{p_k^{r_k}}} \Big) .\end{equation}
 
Consequently, if we denote by $\omega(n)$ the number of different primes dividing $n$ and by $\nu_p(n)$ the $p$-adic order of $n$ we obtain the following corollary to Theorem \ref{TEORPP}.

\begin{cor}
Let $n$ be an odd integer. Then, the number of non-isomorphic generalized quaternion rings over $\Z{n}$ is
$$2^{\omega(n)}\prod_{p\mid n} (\nu_p(n)^2+1).$$
\end{cor}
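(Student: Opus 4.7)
The plan is to reduce the global count over $\Z{n}$ to a product of local counts over each prime-power factor, using (\ref{F1}) and Theorem~\ref{TEORPP}. Write $n = p_1^{r_1}\cdots p_k^{r_k}$ throughout.

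The first step is routine: by the Chinese Remainder Theorem together with (\ref{F1}), every $\Big(\frac{a,b}{\Z{n}}\Big)$ decomposes as $\bigoplus_{i=1}^{k}\Big(\frac{a,b}{\Z{p_i^{r_i}}}\Big)$, and conversely any prescribed family $(a_i,b_i)\in\Z{p_i^{r_i}}^2$ can be lifted via CRT to a single $(a,b)\in\Z{n}^2$ producing those local data. So the assignment sending a global quaternion ring to its tuple of local factors is surjective.

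The main obstacle is to show that the same map is injective on isomorphism classes, i.e.\ that $\Big(\frac{a,b}{\Z{n}}\Big)\cong\Big(\frac{c,d}{\Z{n}}\Big)$ if and only if $\Big(\frac{a,b}{\Z{p_i^{r_i}}}\Big)\cong\Big(\frac{c,d}{\Z{p_i^{r_i}}}\Big)$ for every $i$. The \emph{if} direction is immediate by taking the direct sum of local isomorphisms through (\ref{F1}). For \emph{only if}, my plan is to exploit primitive central idempotents. Any ring isomorphism $\phi$ between the two rings must carry primitive central idempotents to primitive central idempotents; on either side these are precisely the CRT idempotents $e_i$ coming from the centre $\Z{n}$. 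Hence $\phi(e_i)=e_{\sigma(i)}$ for some permutation $\sigma$, and $\phi$ restricts to a ring isomorphism $e_i\Big(\frac{a,b}{\Z{n}}\Big)e_i\cong e_{\sigma(i)}\Big(\frac{c,d}{\Z{n}}\Big)e_{\sigma(i)}$, that is, $\Big(\frac{a,b}{\Z{p_i^{r_i}}}\Big)\cong\Big(\frac{c,d}{\Z{p_{\sigma(i)}^{r_{\sigma(i)}}}}\Big)$. Comparing cardinalities forces $p_i^{r_i}=p_{\sigma(i)}^{r_{\sigma(i)}}$; since the $p_i$ are pairwise distinct, $\sigma$ must be the identity, and the claim follows.

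Once this bijection of isomorphism classes is in hand, the count is immediate: by Theorem~\ref{TEORPP} there are exactly $2r_i^2+2=2(\nu_{p_i}(n)^2+1)$ isomorphism classes of generalized quaternion rings over each $\Z{p_i^{r_i}}$, and multiplying over $i$ yields
$$\prod_{i=1}^{k}2\bigl(\nu_{p_i}(n)^2+1\bigr)=2^{\omega(n)}\prod_{p\mid n}\bigl(\nu_p(n)^2+1\bigr),$$
as desired.
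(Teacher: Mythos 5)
Your overall strategy is exactly the one the paper intends (the paper states this corollary with no written proof, deriving it directly from the decomposition (\ref{F1}) and Theorem \ref{TEORPP}), and your reduction of the count to a product of local counts is the right one. The one step that is not actually justified is the assertion that the primitive central idempotents of $\big(\tfrac{a,b}{\Z{n}}\big)$ are \emph{precisely} the CRT idempotents $e_i$. This is equivalent to saying that each local factor $\big(\tfrac{a,b}{\Z{p_i^{r_i}}}\big)$ has no nontrivial central idempotents, and in the generalized setting this is not obvious: when $a$ or $b$ is a non-unit the centre can be strictly larger than $\Z{p^k}$ (for instance $k$ is central in $\big(\tfrac{0,0}{\Z{p^k}}\big)$), so one must actually check that these larger centres are still connected rings. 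Without that check, your permutation $\sigma$ and the cardinality comparison rest on an unproved claim.

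Fortunately the whole primitive-idempotent apparatus can be discarded, which both closes the gap and shortens the argument. Each CRT idempotent $e_i$ is an element of $\Z{n}\cdot 1$, i.e.\ an integer multiple of the identity; since any ring isomorphism $\phi$ satisfies $\phi(1)=1$ and is additive, it fixes every element of $\mathbb{Z}\cdot 1$, so $\phi(e_i)=e_i$ outright --- no permutation can occur and no primitivity is needed. Then $\phi$ carries $e_i\big(\tfrac{a,b}{\Z{n}}\big)$ isomorphically onto $e_i\big(\tfrac{c,d}{\Z{n}}\big)$, which are the $i$-th local factors, and your injectivity claim follows. With that repair the rest of your proof (surjectivity via CRT lifting, the local count $2(\nu_{p_i}(n)^2+1)$ from Theorem \ref{TEORPP}, and the final product) is correct.
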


\end{document}